\newtheorem{cor}{Corollary}
\newtheorem{Claim}{Claim}
\newtheorem{theorem}{Theorem}[section]
\newtheorem{proposition}[theorem]{Proposition}
\newtheorem{lemma}[theorem]{Lemma}
\newtheorem{remark}[theorem]{Remark}
\def\irr#1{{\rm Irr}(#1)}
\def\irrr#1#2 {\irr {#1 \mid #2}}
\newcommand{\R}{\mathbb R}
\newcommand{\sfe}{{{\mathbb S}^{n-1}}}
\newcommand{\E}{\mathbb E}
\begin{document}

\title[On the local version of the Log-Brunn-Minkowski conjecture]{On the local version of the Log-Brunn-Minkowski conjecture and some new related geometric inequalities}
\author[Alexander V. Kolesnikov, Galyna V. Livshyts]{Alexander V. Kolesnikov, Galyna V. Livshyts}
\address{National Research University Higher School of Economics, Russian Federation}
\email{sascha77@mail.ru}
\address{School of Mathematics, Georgia Institute of Technology, Atlanta, GA} \email{glivshyts6@math.gatech.edu}

\subjclass[2010]{Primary: 52} 
\keywords{Convex bodies, log-concave measures, Brunn-Minkowski inequality, Poincar{\'e} inequality, log-Minkowski problem}
\date{\today}
\begin{abstract} We prove that for any semi-norm $\|\cdot\|$ on $\R^n,$ and any symmetric convex body $K$ in $\R^n,$
\begin{equation}\label{ineq-abs2}
\int_{\partial K} \frac{\|n_x\|^2}{\langle x,n_x\rangle}\leq \frac{1}{|K|}\left(\int_{\partial K} \|n_x\| \right)^2,
\end{equation}
and characterize the equality cases of this new inequality. The above would also follow from the Log-Brunn-Minkowski conjecture, if the latter was proven, and we believe that it may be of independent interest. We, furthermore, obtain an improvement of this inequality in some cases, involving the Poincare constant of $K.$ 

The conjectured Log-Brunn-Minkowski inequality is a strengthening of the Brunn-Minkowski inequality in the partial case of symmetric convex bodies, equivalent to the validity of the following statement: for all symmetric convex smooth sets $K$ in $\R^n$ and all smooth even $f:\partial K\rightarrow \R,$
\begin{equation}\label{ineq-abs}
\int_{\partial K} H_x f^2-\langle \mbox{\rm{II}}^{-1}\nabla_{\partial K}  f,  \nabla_{\partial K}  f\rangle +\frac{f^2}{\langle x,n_x\rangle}\leq \frac{1}{|K|}\left(\int_{\partial K} f   \right)^2.
\end{equation}
In this note, we verify (\ref{ineq-abs}) with the particular choice of speed function $f(x)=|\langle v,n_x\rangle|$, for all symmetric convex bodies $K$, where $v\in\R^n$ is an arbitrary vector.

\end{abstract}
\maketitle

\section{Introduction}

The Brunn-Minkowski inequality, proved in the full generality by Lusternik \cite{Lust}, states that:
\begin{equation}\label{BM}
|\lambda K+(1-\lambda)L|\geq |K|^{\lambda}|L|^{1-\lambda},
\end{equation}
which holds for all Borel-measurable sets $K, L$ and any $\lambda\in [0,1].$ Furthermore, due to the $n-$homogeneity of the Lebesgue measure, (\ref{BM}) self-improves to an a-priori stronger form
\begin{equation}\label{BM-add}
|\lambda K+(1-\lambda)L|^{\frac{1}{n}}\geq \lambda |K|^{\frac{1}{n}}+(1-\lambda)|L|^{\frac{1}{n}}.
\end{equation}
See a survey by Gardner \cite{Gar} on the subject for more information. 

B\"or\"oczky, Lutwak, Yang, Zhang \cite{BLYZ} conjectured that a stronger inequality, called the Log-Brunn-Minkowski inequality, holds in the case when $K$ and $L$ are symmetric convex sets:
\begin{equation}\label{logbm-leb}
|\lambda K+_0(1-\lambda)L|\geq |K|^{\lambda}|L|^{1-\lambda},
\end{equation}
where the zero-sum stands for 
$$\lambda K+_0(1-\lambda)L:=\{x\in\R^n:\,\forall\,u\in\sfe\,|\langle x,u\rangle|\leq h_K(u)^{\lambda}h_L(u)^{1-\lambda}\};$$
here the support function of a convex set $K$ is defined to be
$$h_K(x):=\sup_{y\in K}\langle x,y\rangle.$$

B\"or\"oczky, Lutwak, Yang, Zhang \cite{BLYZ} verified this conjecture for planar symmetric convex sets. Saroglou \cite{christos1} and Cordero-Erasquin, Fradelizi and Maurey \cite{CFM} proved the conjecture for unconditional convex sets in $\R^n$; recently, a stronger result, with a weaker symmetry assumption, was derived by B\"or\"oczky, Kalantzopoulos \cite{BorKal}. Rotem \cite{liran} verified the conjecture for complex convex bodies. 

Colesanti, Livshyts, Marsiglietti \cite{CLM} (Theorem 6.5), and later Kolesnikov, Milman \cite{KolMilsupernew} derived the local version of the Log-Brunn-Minkowski conjecture: in case the Log-Brunn-Minkowski conjecture holds, then for every symmetric strictly-smooth convex body $K$ and every smooth even function $f:\partial K\rightarrow\R,$
\begin{equation}\label{LogBM-infin}
\int_{\partial K} H_x f^2-\langle \mbox{\rm{II}}^{-1}\nabla_{\partial K}  f,  \nabla_{\partial K}  f\rangle +\frac{f^2}{\langle x,n_x\rangle}\leq \frac{1}{|K|}\left(\int_{\partial K} f   \right)^2.
\end{equation}
Here $\rm{II}$ is the second fundamental form of $K$, the mean curvature $H_x=tr(\rm{II})$, and $n_x$ stands for the normal unit vector to $K$ at the point $x$. The integration runs in the Hausdorff $(n-1)-$dimensional measure on the boundary of $K.$ Chen, Huang, Li, Liu \cite{global} and Putterman \cite{Put} showed that the validity of (\ref{LogBM-infin}) for any $C^2$-smooth symmetric convex set $K,$ and any $C^1$-smooth even function $f:\partial K\rightarrow \R$, yields back the Log-Brunn-Minkowski conjecture.

Kolesnikov and Milman \cite{KolMilsupernew} noticed that for any $t\in\R,$ the inequality (\ref{LogBM-infin}) is invariant under the transformation $f\rightarrow f+t\langle x,n_x\rangle,$ or, in other words, the Log-Brunn-Minkowski inequality is invariant under the transformation $L\rightarrow sL$. For the reader's convenience, we outline it as a 

\begin{Claim}\label{claim-invar}
The inequality (\ref{LogBM-infin}) is invariant under the transformation $f\rightarrow f+t\langle x,n_x\rangle.$ 
\end{Claim}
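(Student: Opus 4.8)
The plan to prove Claim~\ref{claim-invar} is to expand both sides of (\ref{LogBM-infin}) under the substitution $f\mapsto g:=f+tu$, where $u=u(x):=\langle x,n_x\rangle$, and to check that the difference between the right-hand side and the left-hand side is independent of $t$. Since $K$ is symmetric, $u$ is even, so $g$ is again an admissible (smooth, even) test function, and the divergence theorem gives $\int_{\partial K}u=\int_K\mathrm{div}(x)\,dx=n|K|$. Expanding, $g^2=f^2+2tuf+t^2u^2$, $\nabla_{\partial K}g=\nabla_{\partial K}f+t\,\nabla_{\partial K}u$, $g^2/u=f^2/u+2tf+t^2u$, and $\big(\int_{\partial K}g\big)^2=\big(\int_{\partial K}f\big)^2+2t\,n|K|\int_{\partial K}f+t^2n^2|K|^2$. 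Collecting the coefficients of $t$ and of $t^2$, and using $\int_{\partial K}u=n|K|$, the claim reduces to the single identity
\begin{equation}\label{eq-star-inv}
\int_{\partial K}\Big(H_x\,u\,f-\langle\mathrm{II}^{-1}\nabla_{\partial K}f,\nabla_{\partial K}u\rangle+f\Big)=n\int_{\partial K}f ,
\end{equation}
valid for every smooth $f$ on $\partial K$: the coefficient of $t$ gives (\ref{eq-star-inv}) directly, while the coefficient of $t^2$ is exactly (\ref{eq-star-inv}) evaluated at $f=u$.

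To prove (\ref{eq-star-inv}), the main step is the geometric identity $\mathrm{II}^{-1}\nabla_{\partial K}u=x-u\,n_x$, i.e. the $\mathrm{II}^{-1}$-image of the tangential gradient of $u$ equals the tangential component of the position vector. This follows by differentiating $u=\langle x,n_x\rangle$ along a tangent vector $e$ at $x\in\partial K$: since $\langle e,n_x\rangle=0$ and $D_en_x=\mathrm{II}(e)\in T_x\partial K$, one has $D_eu=\langle x,\mathrm{II}(e)\rangle=\langle x-u\,n_x,\mathrm{II}(e)\rangle=\langle\mathrm{II}(x-u\,n_x),e\rangle$ by self-adjointness of $\mathrm{II}$. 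Hence, since $\nabla_{\partial K}f$ is tangent,
$$\langle\mathrm{II}^{-1}\nabla_{\partial K}f,\nabla_{\partial K}u\rangle=\langle\nabla_{\partial K}f,\mathrm{II}^{-1}\nabla_{\partial K}u\rangle=\langle\nabla_{\partial K}f,x-u\,n_x\rangle=\langle\nabla_{\partial K}f,x\rangle .$$

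Finally I would integrate by parts on the closed hypersurface $\partial K$. Applying the divergence theorem to the tangent vector field $x-u\,n_x$ gives $\int_{\partial K}\langle\nabla_{\partial K}f,x\rangle=\int_{\partial K}\langle\nabla_{\partial K}f,x-u\,n_x\rangle=-\int_{\partial K}f\,\mathrm{div}_{\partial K}(x-u\,n_x)$, and a short orthonormal-frame computation yields $\mathrm{div}_{\partial K}(x)=n-1$ and $\mathrm{div}_{\partial K}(u\,n_x)=u\,H_x$, so that $\mathrm{div}_{\partial K}(x-u\,n_x)=(n-1)-u\,H_x$. Plugging this into (\ref{eq-star-inv}), the two occurrences of $\int_{\partial K}H_x\,u\,f$ cancel and one is left with $(n-1)\int_{\partial K}f+\int_{\partial K}f=n\int_{\partial K}f$, which is (\ref{eq-star-inv}). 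I do not expect any single step to be a genuine obstacle: the only real care is bookkeeping --- tracking the tangential projection of the non-tangent position vector, using self-adjointness of $\mathrm{II}$ consistently, and justifying the integration by parts for a general smooth (not necessarily even) $f$ --- so the danger is sign slips rather than conceptual difficulty.
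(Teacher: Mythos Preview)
Your proposal is correct and follows essentially the same route as the paper: both arguments reduce the invariance to the single identity
\[
\int_{\partial K}\Big(H_x\,\langle x,n_x\rangle\,f-\langle\mathrm{II}^{-1}\nabla_{\partial K}f,\nabla_{\partial K}\langle x,n_x\rangle\rangle\Big)=(n-1)\int_{\partial K}f,
\]
together with $\int_{\partial K}\langle x,n_x\rangle=n|K|$, and then use bilinearity (the paper phrases this via $Q(f,\langle x,n_x\rangle)=0$, you via the vanishing of the $t$- and $t^2$-coefficients). The only difference is that the paper cites this integration-by-parts identity from Kolesnikov--Milman, whereas you derive it explicitly via the relation $\mathrm{II}^{-1}\nabla_{\partial K}\langle x,n_x\rangle=x-\langle x,n_x\rangle n_x$ and the computation $\mathrm{div}_{\partial K}(x-\langle x,n_x\rangle n_x)=(n-1)-\langle x,n_x\rangle H_x$; your version is thus more self-contained but not conceptually different.
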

\begin{proof}
Indeed, consider the bilinear form
$$Q(f,g)= \int_{\partial K} H_x fg-\langle \mbox{\rm{II}}^{-1}\nabla_{\partial K}  f,  \nabla_{\partial K}  g \rangle+\frac{fg}{\langle x,n_x\rangle}-\frac{1}{|K|}\left(\int_{\partial K} f   \right) \left(\int_{\partial K} g   \right).$$
Integrating by parts inside the boundary (see Kolesnikov, Milman \cite{KM1}, subsection 2.1, for more details), and using the fact that $\nabla^2 \frac{x^2}{2}=Id,$ we get
$$
\int_{\partial K} H_x f\langle x,n_x\rangle-\langle \mbox{\rm{II}}^{-1}\nabla_{\partial K}  f,  \nabla_{\partial K}  \langle x,n_x\rangle\rangle= (n-1)\int_{\partial K} f.
$$
Therefore, using the fact that $\int_{\partial K} \langle x,n_x\rangle=n|K|,$ we see that for any smooth even function $f:\partial K\rightarrow\R,$
$$Q(f,\langle x,n_x\rangle)=0.$$
By bilinearlity, we thus get 
$$Q(f+t\langle x,n_x\rangle, f+t\langle x,n_x\rangle)=Q(f,f).$$
\end{proof}

Colesanti, Livshyts, Marsiglietti \cite{CLM} observed that (\ref{LogBM-infin}) holds for any even smooth function $f:\partial K\rightarrow \R$ in the partial case when $K$ is the Euclidean ball centered at the origin. Indeed, in this case (\ref{LogBM-infin}) boils down to showing that for any smooth  \emph{even} function $\varphi:\sfe\rightarrow\R,$
$$Var(\varphi)\leq \frac{1}{n}\E|\nabla_{\sigma} \varphi|^2,$$
where $\nabla_{\sigma}$ is the spherical gradient, and the variance and the expectation are taken with respect to the random vector distributed uniformly on the sphere. This fact is true, since for even functions the above holds, moreover, with the constant $\frac{1}{2n}<\frac{1}{n},$ in view of the fact that the second eigenvalue of the Laplacian on the unit sphere is ${2n}$. Kolesnikov and Milman \cite{KolMilsupernew} showed, furthermore, that (\ref{LogBM-infin}) holds for any even smooth function $f:\partial K\rightarrow \R$ in the case when $K=B_p^n$, for any $p\in [2,\infty],$ for sufficiently large dimension $n\geq N(p).$

\medskip

Recall the notion of mixed volumes for symmetric convex bodies $K$ and $M$: for $k=1,...,n,$
$$V_k(K,M)=\frac{(n-k)!}{n!}|K+tM|^{(k)}_{t=0},$$
where $\cdot^{(k)}$ stands for the $k$-th derivative. Recall that Minkowski's first inequality states:
$$V_1(K,M)\geq |K|^{\frac{n-1}{n}}|L|^{\frac{1}{n}}.$$ 

In this terminology, considering a symmetric convex set $M$ and letting $f(x)=h_M(n_x)$, we rewrite the inequality (\ref{LogBM-infin}):
\begin{equation}\label{LogBM-norms}
n(n-1)V_{2}(K,M)+\int_{\partial K} \frac{h_M^2(n_x)}{\langle x,n_x\rangle}\leq \frac{n^2V_{1}(K,M)^2}{|K|}.
\end{equation}
In view of H\"older's inequality, we have
$$\int_{\partial K} \frac{h_M^2(n_x)}{\langle x,n_x\rangle}\geq \frac{nV_{1}(K,M)^2}{|K|},$$
and thus (\ref{LogBM-norms}) is a direct strengthening, for symmetric sets, of Minkowski's Second inequality (a partial case of Minkowski's quadratic inequality):
\begin{equation}\label{MinkQ}
V_{2}(K,M)\leq \frac{V_{1}(K,M)^2}{|K|}.
\end{equation}

Putterman \cite{Put} came up with the idea to consider the formulation (\ref{LogBM-norms}), and explained (in the appendix) the equivalence of (\ref{LogBM-norms}) and (\ref{LogBM-infin}). For completeness, we outline this equivalence below.

\begin{Claim}\label{claim-equiv}
The validity of (\ref{LogBM-infin}) for any symmetric strictly-convex $C^2-$smooth body $K$ in $\R^n$ and any even $f\in C^1(\partial K)$ is equivalent to the validity of (\ref{LogBM-norms}) for any symmetric convex body $K$ and any symmetric bounded convex set $M$ in $\R^n.$  
\end{Claim}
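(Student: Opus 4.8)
The plan is to reduce the whole equivalence to a single \emph{dictionary identity} relating the two sides of (\ref{LogBM-infin}) and (\ref{LogBM-norms}), and then feed it an approximation argument in one direction and the invariance of Claim~\ref{claim-invar} in the other. \emph{Step 1 (the dictionary identity).} Fix a symmetric $C^2$-smooth strictly convex body $K$ and a symmetric $C^2$-smooth strictly convex body $M$, and put $f(x)=h_M(n_x)$ on $\partial K$; this $f$ is even because $M=-M$ forces $h_M$ even on $\sfe$ while $n_{-x}=-n_x$. I claim that with this $f$ the quantity $Q(f,f)$ (the left side minus the right side of (\ref{LogBM-infin}), in the notation of Claim~\ref{claim-invar}) equals
$$n(n-1)V_2(K,M)+\int_{\partial K}\frac{h_M^2(n_x)}{\langle x,n_x\rangle}-\frac{n^2V_1(K,M)^2}{|K|},$$
so that (\ref{LogBM-infin}) for $(K,f)$ and (\ref{LogBM-norms}) for $(K,M)$ say literally the same thing. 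The $f^2/\langle x,n_x\rangle$ terms match verbatim; the identity $\int_{\partial K}h_M(n_x)\,d\mathcal{H}^{n-1}(x)=\int_{\sfe}h_M\,dS_K=nV_1(K,M)$ handles the right-hand sides; and for the curvature terms one changes variables by the inverse Gauss map $x=\nabla h_K(u)$, under which $d\mathcal{H}^{n-1}(x)=\det(D^2h_K+h_KI)\,d\sigma(u)$, $\langle x,n_x\rangle=h_K(u)$, $n_x=u$, and the inverse Weingarten map at $x$ becomes $D^2h_K+h_KI$ at $u$, and then compares with $n(n-1)V_2(K,M)=\frac{d^2}{dt^2}\big|_{0}|K+tM|=\frac{d^2}{dt^2}\big|_{0}\frac1n\int_{\sfe}(h_K+th_M)\det(D^2(h_K+th_M)+(h_K+th_M)I)\,d\sigma$, expanding the determinant via mixed discriminants and integrating by parts on $\sfe$. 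This is exactly the computation carried out in \cite{CLM,KolMilsupernew} when the local inequality was extracted from the global one, so I would cite it; this step, though routine, is where the real work sits and is the main obstacle.

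\emph{Step 2 ($(\ref{LogBM-infin})\Rightarrow(\ref{LogBM-norms})$).} Given an arbitrary symmetric convex body $K$ and a symmetric bounded convex set $M$, choose symmetric $C^2$-smooth strictly convex bodies $K_j\to K$ and $M_j\to M$ in the Hausdorff metric. Apply (\ref{LogBM-infin}) on $\partial K_j$ with $f=h_{M_j}\circ n$, rewrite it via Step~1 as (\ref{LogBM-norms}) for $(K_j,M_j)$, and let $j\to\infty$: the quantities $V_1$, $V_2$ and volume are continuous under Hausdorff convergence, and
$$\int_{\partial K_j}\frac{h_{M_j}^2(n_x)}{\langle x,n_x\rangle}=\int_{\sfe}\frac{h_{M_j}^2}{h_{K_j}}\,dS_{K_j}\;\longrightarrow\;\int_{\sfe}\frac{h_M^2}{h_K}\,dS_K,$$
because $S_{K_j}\to S_K$ weakly, $h_{M_j}\to h_M$ and $h_{K_j}\to h_K$ uniformly on $\sfe$, and $h_{K_j}$ is bounded below by a fixed positive constant (as $0\in\mathrm{int}\,K$). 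Hence (\ref{LogBM-norms}) holds for $(K,M)$.

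\emph{Step 3 ($(\ref{LogBM-norms})\Rightarrow(\ref{LogBM-infin})$).} Fix a symmetric $C^2$-smooth strictly convex $K$ and an even $f\in C^1(\partial K)$; we must show $Q(f,f)\le0$. Since $H_x$, $\mathrm{II}^{-1}$ and $1/\langle x,n_x\rangle$ are continuous and bounded on the compact set $\partial K$, the map $f\mapsto Q(f,f)$ is continuous in the $C^1(\partial K)$-norm, so by density it suffices to treat $f\in C^\infty(\partial K)$ even. For such $f$ set $\bar f=f\circ\nabla h_K\in C^\infty(\sfe)$, which is even, and consider $g_t=\bar f+t\,h_K|_{\sfe}$. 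As $K$ is $C^2$-smooth and strictly convex, $D^2h_K+h_KI$ is positive definite on $\sfe$, so for $t$ large enough $D^2g_t+g_tI=(D^2\bar f+\bar fI)+t(D^2h_K+h_KI)>0$, and by the classical existence and regularity of a convex body with prescribed support function, $g_t=h_{M_t}$ for a symmetric $C^2$-smooth strictly convex body $M_t$. Evaluating along $x=\nabla h_K(u)$ (so $\langle x,n_x\rangle=h_K(u)$ and $n_x=u$) gives $f+t\langle x,n_x\rangle=h_{M_t}(n_x)$ on $\partial K$; applying (\ref{LogBM-norms}) to $(K,M_t)$ and invoking Step~1 yields $Q\big(f+t\langle x,n_x\rangle,\,f+t\langle x,n_x\rangle\big)\le0$. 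Finally Claim~\ref{claim-invar} gives $Q\big(f+t\langle x,n_x\rangle,\,f+t\langle x,n_x\rangle\big)=Q(f,f)$, so $Q(f,f)\le0$, which completes the equivalence.
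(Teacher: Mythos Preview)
Your proof is correct and follows essentially the same route as the paper's: the dictionary identity of Step~1 and the approximation in Step~2 match the paper almost verbatim (you are actually more careful about the convergence of the integral $\int_{\sfe}h_M^2/h_K\,dS_K$, which the paper lumps under ``continuity of mixed volumes''). Your Step~3 is simply a more explicit construction behind the paper's one-line assertion that any even $f\in C^1(\partial K)$ can be written as $h_M(n_x)-t\langle x,n_x\rangle$ for some symmetric convex $M$ and some $t\in\R$, after which Claim~\ref{claim-invar} is invoked exactly as you do.
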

\begin{proof} First, recall that for any symmetric strictly-convex $C^2-$smooth body $K$ and any symmetric bounded $C^2$-smooth convex set $M$ in $\R^n$, we have, as per Kolesnikov, Milman \cite{KolMil-1}, Colesanti \cite{Col1}, or Bonnesen, Fenchel \cite{BF}:
$$nV_1(K,M)=|K+tM|'_{t=0}=\int_{\partial K} h_M(n_x),$$
and
$$n(n-1)V_2(K,M)=|K+tM|''_{t=0}=	\int_{\partial K} H_x h_M(n_x)^2-\langle \mbox{\rm{II}}^{-1}\nabla_{\partial K}  h_M(n_x),  \nabla_{\partial K}  h_M(n_x)\rangle.$$  
Therefore, the validity of (\ref{LogBM-infin}) for any symmetric strictly-convex $C^2-$smooth body $K$ in $\R^n$ and any even $f\in C^1(\partial K)$ implies the validity of (\ref{LogBM-norms}) for any symmetric strictly-convex $C^2-$smooth body $K$ and any symmetric $C^2-$smooth bounded convex set $M$ in $\R^n.$  

Next, the class of symmetric $C^2$-smooth strictly convex bodies is dense in the Hausdorff metric in the class of all symmetric convex bodies in $\R^n,$ and the class of symmetric $C^2-$smooth bounded convex sets is dense in the class of symmetric bounded convex sets. Recall that mixed volumes are continuous in the Hausdorff metric in every variable (see, e.g. Artstein-Avidan, Giannopolous, Milman \cite{AGM} Theorem B.1.14, or the books by Schneider \cite{book4}, or Bonnesen, Fenchel \cite{BF}). Therefore, the validity of (\ref{LogBM-infin}) in the smooth case implies the validity of (\ref{LogBM-norms}) for any symmetric convex body $K$ and any symmetric bounded convex set $M$ in $\R^n.$  

Lastly, to get the converse implication, notice that for any symmetric strictly-convex $C^2-$smooth body $K$ in $\R^n$, any function $f\in C^1(\partial K)$ can be written in the form $f(x)=h_M(n_x)-th_K(n_x),$ for some $t\in\R,$ and some symmetric convex set $M.$ By Claim \ref{claim-invar}, the validity of (\ref{LogBM-infin}) for $K$ and $f(x)$ is equivalent to the validity of (\ref{LogBM-infin}) for $K$ and $h_M(n_x).$ In other words, it suffices to verify (\ref{LogBM-infin}) only for functions $f$ given by $f(x)=h_M(n_x)$, and this follows from the validity of (\ref{LogBM-norms}).
\end{proof}

\begin{remark} The inequality (\ref{LogBM-norms}) holds for any pair of $K$ and $M$ such that $h_M(u)=h_K(u)$ for all $u$ in the support of $dS_K$  (the surface area measure of $K$.) Indeed, in this case, noting that $h_M(n_x)=h_K(n_x)=\langle x,n_x\rangle,$ we write
$$\int_{\partial K} \frac{h_M^2(n_x)}{\langle x,n_x\rangle}dH_{n-1}(x)=n|K|,$$
and
$$\frac{n^2V_{1}(K,M)^2}{|K|}=n^2|K|.$$
Therefore, 
$$\int_{\partial K} \frac{h_M^2(n_x)}{\langle x,n_x\rangle}=\frac{1}{n}\frac{n^2V_{1}(K,M)^2}{|K|},$$
and (\ref{LogBM-norms}) follows from Minkowski's Second inequality (\ref{MinkQ}). Therefore, (\ref{LogBM-norms}) holds, for example, when $K$ is a polytope circumscribed around the unit ball, and $M=B_2^n.$

We note also that all the equality cases of Minkowski's Second inequality (characterized by Bol \cite{Bol}, and in greater generality by Shenfeld and van Handel \cite{vHS}) are also equality cases of the Log-Brunn-Minkowski inequality (though there is potentially more equality cases of the Log-Brunn-Minkowski inequality itself). Indeed, Theorem 3.4 from \cite{vHS} implies that if symmetric convex sets $M$ and $K$ give an equality in Minkowski's second inequality, and $K$ is full-dimensional, then (in particular) there exists an $\alpha>0$ such that for any $\theta\in\sfe\cap supp(dS_K),$ one has $h_M(\theta)=\alpha h_K(\theta).$ This, in turn, implies that
\begin{equation}\label{vHSref}
\int_{\partial K} \frac{h_M^2(n_x)}{\langle x,n_x\rangle}=n\frac{V_1(K,M)^2}{|K|}.
\end{equation}
Thus if $K$ and $M$ give an equality in Minkowski's Second inequality (\ref{MinkQ}), then the equality also holds in (\ref{LogBM-norms}), and thus the equality also holds in the Log-Brunn-Minkowski inequality (in view of the invariance property of (\ref{LogBM-infin}).)

Note that if it wasn't for (\ref{vHSref}) for all of the equality cases of MSI, then one would have a counterexample to the Log-Brunn-Minkowski conjecture, as the MSI is weaker than (\ref{LogBM-norms}); the work of Bol \cite{Bol} and Shenfeld and van Handel \cite{vHS}) prevents it, however.
\end{remark}

The following remark was independently noticed by Emanuel Milman, of which the authors were not aware initially.
\begin{remark}
As mentioned earlier, Kolesnikov and Milman \cite{KolMilsupernew} verified (\ref{LogBM-infin}) for $K=B^n_{\infty}$. We note that the reduction of (\ref{LogBM-infin}) to (\ref{LogBM-norms}) allows to check this fact in a rather elementary manner: letting $K=B^n_{\infty}$ and $\varphi=h_M,$ for some symmetric convex bounded set $M,$ the inequality (\ref{LogBM-norms}) rewrites as
\begin{equation}\label{rem1}
n(n-1)V_{2}(B^{n}_{\infty},M)+2\cdot 2^{n-1}\sum_{i=1}^n  \varphi^2(e_i)\leq 2^{n}\left(\sum_{i=1}^n  \varphi(e_i)\right)^2,
\end{equation}
where we used the fact that norms are even functions, and thus $\varphi(e_i)=\varphi(-e_i),$ for all $i=1,...,n.$ Here $e_1,...,e_n$ denotes the canonical basis.

Let $B_{\varphi}$ be the origin-centered coordinate parallelepiped with sides $2 \varphi(e_1),...,2 \varphi(e_n),$ and note that $M\subset B_{\varphi}.$ Recall that mixed volumes are monotone, and therefore,
\begin{equation}\label{rem2} 
V_{2}(B^{n}_{\infty},M)\leq V_{2}(B^{n}_{\infty},B_{ \varphi}).
\end{equation}
By differentiating the determinant of the matrix with diagonal elements $1+t \varphi(e_1),...,1+t \varphi(e_n)$, we see that
\begin{equation}\label{rem3} 
n(n-1)V_{2}(B^{n}_{\infty},B_{ \varphi})=2\cdot2^{n}\sum_{i\neq j}  \varphi(e_i) \varphi(e_j).
\end{equation}
We conclude by noticing that the inequality (\ref{rem1}) follows from (\ref{rem2}), (\ref{rem3}), and the equality
$$\sum_{i=1}^n \sum_{j=1}^n \varphi(e_i)\varphi(e_j)=\left(\sum_{i=1}^n  \varphi(e_i)\right)^2.$$
In conclusion, (\ref{LogBM-norms}) holds when $K=B^n_{\infty}$.
\end{remark}

\medskip




In view of the fact that mixed volumes are non-negative, the following is true:

\medskip
\medskip

\emph{If the Log-Brunn-Minkowski conjecture holds, then for any symmetric convex body $K\subset\R^n$ and any semi-norm $\|\cdot\|$ on $\R^n,$ one has
\begin{equation}\label{ourguy}
\int_{\partial K} \frac{\|n_x\|^2}{\langle x,n_x\rangle}\leq \frac{1}{|K|}\left(\int_{\partial K} \|n_x\|  \right)^2.
\end{equation}}

\medskip
\medskip

Indeed, letting $f(x)=\|n_x\|_{M^o}$ in (\ref{LogBM-infin}), we obtain (\ref{LogBM-norms}), which, in turn, yields (\ref{ourguy}), in view of the fact that $V_2(K,M)\geq 0.$ As the Log-Brunn-Minkowski inequality is not known in general, (\ref{ourguy}) is not known a-priori. In this note, we show that (\ref{ourguy}) is indeed true:

\begin{theorem}\label{thm-zono}
For any symmetric convex bounded set $K$ in $\R^n$ with non-empty interior, and any semi-norm $\|\cdot\|$ on $\R^n$, we have
$$\int_{\partial K} \frac{\|n_x\|^2}{\langle x,n_x\rangle}\leq \frac{1}{|K|}\left(\int_{\partial K} \|n_x\| \right)^2.$$
Furthermore, the equality occurs if and only if $\|\cdot\|=|\langle \cdot,v\rangle|$, for some vector $v\in\R^n,$ and $K=C+[-v,v]$ for some $(n-1)-$dimensional symmetric convex set $C$.
\end{theorem}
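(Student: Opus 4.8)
\medskip

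\noindent\textit{Sketch of the intended proof.} The plan is to move to the Gauss-map picture and reduce the whole statement to one elementary fact about shadows of symmetric convex bodies. Write the seminorm as $\|\cdot\|=h_M$, where $M=\{y\in\R^n:\langle y,u\rangle\le\|u\|\ \forall u\}$ is a symmetric bounded convex set (possibly of dimension $<n$). Using $\langle x,n_x\rangle=h_K(n_x)$, the identity $\int_{\partial K}h_M(n_x)\,dH_{n-1}(x)=nV_1(K,M)$, and the change of variables under the Gauss map (which pushes $dH_{n-1}$ forward to the surface area measure $dS_K$), the asserted inequality is equivalent to
$$\int_{\mathbb{S}^{n-1}}\frac{h_M^2}{h_K}\,dS_K\ \le\ \frac{\bigl(nV_1(K,M)\bigr)^2}{|K|}.$$
The first step is trivial: setting $\|M\|_K:=\min\{s>0:M\subseteq sK\}=\sup_{u}h_M(u)/h_K(u)$, one has $h_M\le\|M\|_K\,h_K$ pointwise, so multiplying by $h_M\ge0$ and integrating against $dS_K$ gives $\int_{\mathbb{S}^{n-1}}\frac{h_M^2}{h_K}\,dS_K\le\|M\|_K\,nV_1(K,M)$. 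Hence everything reduces to the inequality
$$\|M\|_K\,|K|\ \le\ nV_1(K,M).\qquad(\star)$$

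The core is $(\star)$, and I would first isolate its case $M=[-v,v]$. By Cauchy's projection formula, $nV_1(K,[-v,v])=\int_{\partial K}|\langle v,n_x\rangle|\,dH_{n-1}(x)=2|v|\,\mathrm{vol}_{n-1}(P_{v^{\perp}}K)$, so for a segment $(\star)$ says exactly
$$|K|\ \le\ \frac{2}{\|\hat v\|_K}\,\mathrm{vol}_{n-1}(P_{\hat v^{\perp}}K),\qquad \hat v=v/|v|.$$
This I would prove directly by Fubini along the lines $y+\R\hat v$, $y\in P_{\hat v^{\perp}}K$: the fiber-length function $L(y)=\mathrm{length}\bigl(K\cap(y+\R\hat v)\bigr)$ satisfies $|K|=\int_{P_{\hat v^{\perp}}K}L(y)\,dy$, it is \emph{concave} (it is the difference of the concave ``upper'' and the convex ``lower'' graph functions of $K$) and \emph{even} (since $K=-K$), hence it attains its maximum at $y=0$, where $L(0)=2\max\{t:t\hat v\in K\}=2/\|\hat v\|_K$; therefore $|K|\le L(0)\,\mathrm{vol}_{n-1}(P_{\hat v^{\perp}}K)$, as needed.

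Next I would bootstrap $(\star)$ from segments to a general $M$. Pick $u_0\in\mathbb{S}^{n-1}$ with $\|M\|_K=h_M(u_0)/h_K(u_0)$ and a supporting point $p\in M$ of $M$ in direction $u_0$, so $\langle p,u_0\rangle=h_M(u_0)$. Then on the one hand
$$\|M\|_K=\frac{\langle p,u_0\rangle}{h_K(u_0)}\le\sup_{u}\frac{\langle p,u\rangle}{h_K(u)}=\|p\|_K,$$
and applying the segment case of $(\star)$ to $[-p,p]$ gives $\|p\|_K\,|K|\le nV_1(K,[-p,p])$; on the other hand $[-p,p]\subseteq M$, so $nV_1(K,[-p,p])\le nV_1(K,M)$ by monotonicity of mixed volumes. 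Chaining, $\|M\|_K|K|\le\|p\|_K|K|\le nV_1(K,[-p,p])\le nV_1(K,M)$, which is $(\star)$; together with the pointwise bound this finishes the inequality. (One should also observe that for a segment $V_2(K,[-v,v])=0$, since $t\mapsto|K+t[-v,v]|$ is affine; thus the segment case of Theorem~\ref{thm-zono} is precisely $(\ref{LogBM-infin})$ with $f(x)=|\langle v,n_x\rangle|$, which is why this speed function is the relevant building block.)

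Finally, the equality case, which I expect to be the only genuine obstacle. I would trace equality through the three inequalities above. Equality in the pointwise step forces, $S_K$-almost everywhere, $h_M=0$ or $h_M=\|M\|_K h_K$; equality in $(\star)$ forces, via $nV_1(K,[-p,p])=nV_1(K,M)$, that $h_M=|\langle p,\cdot\rangle|$ on $\mathrm{supp}(S_K)$, and, via equality in the fiber lemma, that all fibers of $K$ in direction $\hat p$ have equal length — which by the concave–even argument means the upper and lower graph functions of $K$ in that direction are \emph{affine}, i.e.\ $K=C+[-w,w]$ for an $(n-1)$-dimensional symmetric convex set $C$ and some $w$ parallel to $p$ — and that $u_0$ also realizes $\|p\|_K$. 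Assembling these facts (and checking that on the support of the surface area measure of such a cylinder the seminorm must be of the rank-one form $|\langle\cdot,v\rangle|$ with $v\parallel w$), together with the direct verification that every such pair is an equality case, should yield the stated characterization. The delicate points I anticipate are precisely the last ones: passing cleanly from ``equal fiber lengths together with the contact/support conditions'' to the product description of $K$ and $\|\cdot\|$, and handling the directions of $\mathbb{S}^{n-1}$ that $S_K$ does not see.
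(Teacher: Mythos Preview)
Your argument for the inequality is correct and runs parallel to the paper's, but with a genuinely different organization. Both proofs ultimately establish the pointwise bound $h_M(u)/h_K(u)\le nV_1(K,M)/|K|$ and then multiply by $h_M$ and integrate against $dS_K$. The paper proves this bound as Lemma~\ref{max}: it writes $\|u\|=\sup_{v\in\Omega}|\langle u,v\rangle|$, applies the section estimate $1/h_K(u)\le 2|K\cap u^\perp|/|K|$ (Lemma~\ref{h_K}), projects the section onto $v^\perp$ to get $|K\cap u^\perp|\,|\langle u,v\rangle|\le |K|v^\perp|$, and then swaps the supremum with the integral via $2\sup_v|K|v^\perp|\le\int\sup_v|\langle\theta,v\rangle|\,dS_K$. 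You instead factor through the scalar $\|M\|_K$, reduce $(\star)$ to a single extremal segment $[-p,p]$ by monotonicity of $V_1$, and prove the segment case by a Fubini along fibers \emph{parallel to} $p$ (concave $+$ even $\Rightarrow$ max at $0$). So the paper slices perpendicular to $u$ and then projects, while you slice along $p$ directly; the paper uses a sup--integral interchange where you use monotonicity of mixed volumes. Your route is a bit more streamlined and makes the reduction to segments explicit; the paper's route, on the other hand, packages the equality analysis more cleanly because the representation $\|u\|=\sup_{v\in\Omega}|\langle u,v\rangle|$ lets one conclude directly that $\Omega$ is collinear.

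For the equality case your sketch is on the right track but two points need to be nailed down. First, ``all fibers in direction $\hat p$ have equal length'' really does force $K=C+[-w,w]$ with $w\parallel p$: writing the upper graph $f_+$ over $P_{\hat p^\perp}K$, constancy of $f_+-f_-$ and symmetry give that $f_+-\rho$ is concave and odd, hence (positively homogeneous and additive, so) linear; this yields the tilted-cylinder description. Second, from $h_M=|\langle p,\cdot\rangle|$ only on $\mathrm{supp}\,S_K$ you cannot conclude $M=[-p,p]$ globally without using the structure of $K$: once $K$ is a cylinder with axis parallel to $p$, the ``side'' normals of $K$ span $p^\perp$ (this uses that $C$ is full-dimensional in its hyperplane and Claim~\ref{claim}), so $h_M=0$ on a spanning subset of $p^\perp$ forces $M\subset\R p$, and then the value at the top/bottom normal pins down $M=[-p,p]$. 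With these two pieces made explicit your equality argument goes through; the paper avoids them by working with the $\Omega$-representation from the start.
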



Note that the inequality 
\begin{equation}\label{false}
\int_{\partial K} \frac{f^2}{\langle x,n_x\rangle}\leq \frac{1}{|K|}\left(\int_{\partial K} f \right)^2.
\end{equation}
cannot hold without the strong assumption of $f$ being a semi-norm of $n_x$: indeed, if $f(x)=1_{\Omega}$ for a very small set $\Omega\subset \partial K$, then (\ref{false}) should fail, as one might note. Another example is $K=B^n_1$: in this case, (\ref{false}) boils down to
$$\sum_{i=1}^{2^n} f^2(u_i)\leq\frac{n}{2^n} \left(\sum_{i=1}^{2^n} f(u_i)\right)^2,$$
where $u_i$ are the unit normals to the faces of $B_1^n.$ When $f(u_i)=f(-u_i)=1$ for a fixed index $i$, and $f(u_j)=0$ for all other indices $j\neq i$, the above becomes
$$2\leq \frac{4n}{2^n},$$
which fails for any $n>2.$

\medskip
\medskip

By $r(K)$ denote the in-radius of $K$ (the radius of the largest euclidean ball contained in $K$), and $C_{poin}(K)$ denotes the Poincare constant of $K$. That is, $C_{poin}(K)$ is the smallest number such that
\begin{equation}\label{PCdef}
\int_K g^2-\frac{1}{|K|}\left(\int_K g\right)^2\leq C^2_{poin}(K)\int_K |\nabla g|^2,
\end{equation}
for all $C^1-$smooth functions $g$ on $K,$ such that the integrals above exist. To complement Theorem \ref{thm-zono}, we obtain another estimate, which may be stronger in some cases:

\begin{theorem}\label{thm-bound}
For any symmetric convex body $K$ and any semi-norm $\|\cdot\|$, we have
$$
\int_{\partial K} \frac{\|n_x\|^2}{\langle x,n_x\rangle}\leq \inf_{T}\frac{2C_{poin}(TK)}{r(TK)}\cdot \frac{\left(\int_{\partial K} \|n_x\| \right)^2}{|K|},
$$
where the infimum runs over all volume preserving linear operators $T$.
\end{theorem}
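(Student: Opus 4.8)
The plan is to exploit the scaling structure of the quantity and reduce to a Poincaré-type estimate on $K$ itself, applied to a cleverly chosen test function built from the semi-norm. First I would observe that both sides of the claimed inequality transform predictably under a volume-preserving linear map $T$: replacing $K$ by $TK$ and $\|\cdot\|$ by $\|T^{-1}(\cdot)\|$ (so that $\|n_x\|$ at a boundary point of $K$ corresponds, up to the Jacobian factors on the normal, to the transformed semi-norm on $\partial(TK)$), the left-hand side and the ratio $\bigl(\int_{\partial K}\|n_x\|\bigr)^2/|K|$ are both invariant. Hence it suffices to prove, for every symmetric convex body $K$ and every semi-norm, the bound with $T = \mathrm{Id}$, i.e.
\begin{equation*}
\int_{\partial K} \frac{\|n_x\|^2}{\langle x,n_x\rangle}\leq \frac{2C_{poin}(K)}{r(K)}\cdot\frac{\left(\int_{\partial K}\|n_x\|\right)^2}{|K|},
\end{equation*}
and then take the infimum over $T$ at the end.

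The heart of the argument is to compare the surface integral $\int_{\partial K}\|n_x\|^2/\langle x,n_x\rangle$ with a solid integral over $K$. Writing $\|u\| = \max_{v\in L}\langle v,u\rangle$ for the convex body $L$ polar to the unit ball of $\|\cdot\|$ (so $\|n_x\| = h_L(n_x)$), I would first handle the case of a smooth strictly convex body and a smooth $L$, where $h_L(n_x)$ is the normal speed of the deformation $K\mapsto K+tL$. The key identity is the divergence-theorem computation: for the vector field $\Phi(x) = \|x\|\,\nabla\|x\|$ (defined a.e., and one-homogeneous so that $\langle \Phi(x), x\rangle = \|x\|^2$ and $\mathrm{div}\,\Phi = |\nabla\|x\||^2 + \|x\|\Delta\|x\|$ in the sense of distributions), one gets
\begin{equation*}
\int_{\partial K}\frac{\|n_x\|^2}{\langle x,n_x\rangle}\,\frac{\langle x,n_x\rangle}{?}\ \ \text{-- }\text{more precisely } \int_{\partial K}\|n_x\|\,\langle \nabla\|n_x\|, ?\rangle,
\end{equation*}
so the precise choice of vector field requires care. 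The cleaner route, which I expect to be the one that works, is to use the substitution $x\mapsto \rho_K(\theta)\theta$ in polar coordinates and relate $\int_{\partial K}\|n_x\|^2/\langle x,n_x\rangle$ to $\int_{\sfe}\|n\|^2\,\rho_K^{n-1}\,d\theta$-type expressions, then invoke the Poincaré inequality (\ref{PCdef}) on $K$ applied to a radial-like function whose gradient is controlled by $1/r(K)$. Concretely: take $g$ on $K$ to be (a mollification of) the function $x\mapsto \|x\|$ — it is convex, $1$-Lipschitz with respect to $\|\cdot\|$, hence $\|\nabla g\|_{op}\le 1/r(K)$ since $B_2^n(r(K))\subset K$ forces $\|u\|\ge \|u\|_2$-comparisons — no: rather $\|x\|\le |x|/r(K)$-type bounds give $|\nabla g|\le 1/r(K)$ pointwise. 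Then $\int_K|\nabla g|^2\le |K|/r(K)^2$, and the Poincaré inequality bounds $\mathrm{Var}_K(g)$; separately, a boundary trace / layer-cake argument converts $\int_{\partial K}\|n_x\|^2/\langle x,n_x\rangle$ into something dominated by $\mathrm{Var}_K(g)$ up to the factor $2$ and the normalization by $\bigl(\int_{\partial K}\|n_x\|\bigr)^2$.

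I would then remove the smoothness assumptions by approximation in the Hausdorff metric, using continuity of all the quantities involved (surface integrals of support functions are mixed volumes, which are Hausdorff-continuous, as recalled in the proof of Claim \ref{claim-equiv}; the Poincaré constant and in-radius are also continuous under Hausdorff convergence of convex bodies), and finally pass to a general semi-norm by approximating its unit ball by smooth strictly convex bodies and using monotone/dominated convergence.

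The main obstacle I anticipate is the second step: producing the exact inequality relating the surface integral $\int_{\partial K}\|n_x\|^2/\langle x,n_x\rangle$ to $C_{poin}(K)\cdot r(K)^{-1}\cdot|K|^{-1}\bigl(\int_{\partial K}\|n_x\|\bigr)^2$ with the sharp constant $2$. This requires choosing the right test function $g$ in the Poincaré inequality and the right vector field in the divergence theorem so that the boundary term reproduces $\int_{\partial K}\|n_x\|^2/\langle x,n_x\rangle$ exactly (not just up to an uncontrolled constant), while the interior term is genuinely bounded by $|K|/r(K)^2$. I expect the correct choice is $g(x)=\|x\| - c$ for the centering constant $c = \frac{1}{|K|}\int_K\|x\|$, combined with the vector field $x\|x\|\,\nabla\|x\| / \langle x, \nabla\|x\|\rangle$ or a normalized variant, together with the homogeneity identity $\langle x,\nabla\|x\|\rangle = \|x\|$; tracking where the factor of $2$ enters (likely from $\mathrm{div}(x\|x\|^2) = (n+2)\|x\|^2$ versus a $(n)\|x\|^2$ term, or from an application of Cauchy–Schwarz that is tight only in the equality configuration $K = C+[-v,v]$) is the delicate bookkeeping that the rest of the proof hinges on.
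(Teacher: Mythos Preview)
Your proposal has a genuine gap at the core step, and the approach you sketch does not reproduce the quantities in the inequality.

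The main confusion is that the integrand on the left is $\|n_x\|^2/\langle x,n_x\rangle$, where the semi-norm is evaluated at the \emph{unit normal} $n_x$, not at the point $x$. Your candidate test function $g(x)=\|x\|$ and vector field $\Phi(x)=\|x\|\nabla\|x\|$ produce boundary terms of the form $\|x\|\langle\nabla\|x\|,n_x\rangle$, which have no direct relation to $\|n_x\|^2/\langle x,n_x\rangle$. No choice of homogeneity or normalization of a vector field built from $\|x\|$ will make $\|n_x\|$ appear on the boundary. Relatedly, your claimed gradient bound $|\nabla g|\le 1/r(K)$ is wrong for an arbitrary semi-norm: $|\nabla\|x\||$ is the Euclidean Lipschitz constant of $\|\cdot\|$, which depends only on the semi-norm and has nothing to do with $r(K)$. (The bound you have in mind holds only for the Minkowski functional $\|\cdot\|_K$, but the theorem concerns an \emph{arbitrary} semi-norm.) So neither the boundary term nor the interior term in your divergence computation gives what is needed.

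The paper's proof takes a completely different route. One solves the Neumann problem $\Delta u=\mathrm{const}$ on $K$ with $\langle\nabla u,n_x\rangle=\|n_x\|$ on $\partial K$; this is how $\|n_x\|$ is brought into play. The factor $1/r(K)$ enters simply through the pointwise bound $\langle x,n_x\rangle\ge r(K)$, giving
\[
\int_{\partial K}\frac{\|n_x\|^2}{\langle x,n_x\rangle}\le\frac{1}{r}\int_{\partial K}|\nabla u|\,\langle\nabla u,n_x\rangle.
\]
One then applies the divergence theorem to $|\nabla u|\nabla u$, uses Young's inequality with free parameters, and applies the Poincar\'e inequality \emph{coordinate-wise to $\nabla u$} (legitimate since $u$ is even, hence $\int_K\partial_i u=0$). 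This produces terms $\int_K(\Delta u)^2$ and $\int_K\|\nabla^2 u\|_{HS}^2$; the latter is controlled by the former via a Bochner/Reilly identity (this is the content of Lemma~\ref{lemma-boch}, which uses the non-negativity of the mixed volume $V_2(K,M)$ in an essential way). Optimizing the Young parameters at $\alpha=\beta=C_{poin}$ is exactly where the constant $2$ emerges. The invariance under volume-preserving $T$ is then checked, as you correctly note, by a change of variables at the end.
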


\begin{remark}
Recall that $\inf_{T}\frac{2C_{poin}(TK)}{r(TK)}\leq Cn^{1/4},$ as follows from the work of Lee, Vempala \cite{LV} and Kannan, Lovasz, and Simonovits \cite{KLS}. 

Note that for some convex bodies $K,$ the expression $\inf_{T}\frac{2C_{poin}(TK)}{r(TK)}$ could be significantly smaller than $1$, and therefore in some cases, Theorem \ref{thm-bound} is stronger than Theorem \ref{thm-zono}. For example, when $K$ is an ellipsoid, 
$$\inf_{T}\frac{2C_{poin}(TK)}{r(TK)}=\frac{C}{\sqrt{n}},$$
and we have, for every semi-norm $\|\cdot\|$,
$$
\int_{\partial K} \frac{\|n_x\|^2}{\langle x,n_x\rangle}\leq \frac{c}{\sqrt{n}} \frac{\left(\int_{\partial K} \|n_x\| \right)^2}{|K|}.
$$
More generally, when $K$ is a linear image of an $L_p$-ball, we get the inequality

$$
\int_{\partial K} \frac{\|n_x\|^2}{\langle x,n_x\rangle}\leq cn^{-
\min(\frac{1}{2},\frac{1}{p})} \frac{\left(\int_{\partial K} \|n_x\| \right)^2}{|K|}.
$$

\end{remark}

As an immediate corollary of Theorems \ref{thm-zono} and \ref{thm-bound} and Minkowski's Second inequality (\ref{MinkQ}), we get

\begin{cor}\label{cor}
For every pair of symmetric convex bodies $K$ and $M$,
$$n(n-1)V_{2}(K,M)+\int_{\partial K} \frac{h_M^2(n_x)}{\langle x,n_x\rangle}\leq $$$$\left(\frac{n-1}{n}+\min\left(1,\frac{2C_{poin}(K)}{r(K)}\right)\right)\frac{n^2V_{1}(K,M)^2}{|K|}.$$
\end{cor}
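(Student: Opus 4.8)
The plan is to obtain the statement by directly combining Theorems \ref{thm-zono} and \ref{thm-bound} with Minkowski's Second inequality, after the standard observation that the support function $h_M$ is an admissible semi-norm. First I would note that for a symmetric convex body $M$ the function $u\mapsto h_M(u)$ is a semi-norm on $\R^n$: it is positively $1$-homogeneous, it is subadditive since $h_M(u+v)\le h_M(u)+h_M(v)$ for every convex $M$, and it is even because $M=-M$. Hence $\|\cdot\|:=h_M(\cdot)$ is a legitimate choice in both Theorem \ref{thm-zono} and Theorem \ref{thm-bound}, applied to the body $K$.

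Next I would recall the first variation identity $nV_1(K,M)=\int_{\partial K}h_M(n_x)$, valid for all symmetric convex bodies $K,M$; for $C^2$-smooth strictly convex $K$ this is the displayed identity in the proof of Claim \ref{claim-equiv}, and the general case follows either from $nV_1(K,M)=\int_{\sfe}h_M\,dS_K$ or from that smooth identity together with the continuity of mixed volumes under Hausdorff approximation, exactly as argued in the proof of Claim \ref{claim-equiv}. Consequently $\big(\int_{\partial K}\|n_x\|\big)^2=n^2V_1(K,M)^2$, so Theorem \ref{thm-zono} gives
$$\int_{\partial K}\frac{h_M^2(n_x)}{\langle x,n_x\rangle}\le \frac{n^2V_1(K,M)^2}{|K|},$$
while Theorem \ref{thm-bound}, evaluated at the particular volume-preserving operator $T=\mathrm{Id}$, gives
$$\int_{\partial K}\frac{h_M^2(n_x)}{\langle x,n_x\rangle}\le \frac{2C_{poin}(K)}{r(K)}\cdot\frac{n^2V_1(K,M)^2}{|K|}.$$
Taking the smaller of the two right-hand sides yields
$$\int_{\partial K}\frac{h_M^2(n_x)}{\langle x,n_x\rangle}\le \min\!\left(1,\ \frac{2C_{poin}(K)}{r(K)}\right)\cdot\frac{n^2V_1(K,M)^2}{|K|}.$$

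Finally I would invoke Minkowski's Second inequality (\ref{MinkQ}), $V_2(K,M)\le V_1(K,M)^2/|K|$, to bound the curvature term: $n(n-1)V_2(K,M)\le n(n-1)\frac{V_1(K,M)^2}{|K|}=\frac{n-1}{n}\cdot\frac{n^2V_1(K,M)^2}{|K|}$. Adding this to the previous display produces exactly
$$n(n-1)V_2(K,M)+\int_{\partial K}\frac{h_M^2(n_x)}{\langle x,n_x\rangle}\le \left(\frac{n-1}{n}+\min\!\left(1,\ \frac{2C_{poin}(K)}{r(K)}\right)\right)\frac{n^2V_1(K,M)^2}{|K|},$$
which is the assertion. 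I do not expect any genuine obstacle here: every ingredient is already in hand, and the only point deserving a word of care is the passage from the smooth first-variation identity to its validity for arbitrary symmetric convex bodies $K$ and $M$, handled by the continuity of mixed volumes just as in Claim \ref{claim-equiv}.
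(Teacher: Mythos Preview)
Your proof is correct and follows exactly the route the paper indicates: it calls the corollary an immediate consequence of Theorems \ref{thm-zono} and \ref{thm-bound} together with Minkowski's Second inequality (\ref{MinkQ}), and your write-up simply spells this out. The only addition you make is the explicit remark that $h_M$ is a semi-norm and the specialization $T=\mathrm{Id}$ in Theorem \ref{thm-bound}, both of which are the evident choices.
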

In particular, the inequality (\ref{LogBM-norms}) holds with a factor of 2 (and even with a factor of $\frac{2n-1}{n}$). 

\begin{remark} Corollary \ref{cor} implies that the function $|K+_0 tM|^{-\frac{n-1}{n}}$ is convex at $t=0$. Indeed, letting $F(t)=|K+_0 tM|$, we see (from \cite{CLM}, \cite{KolMilsupernew} or \cite{HKL}) that
$$(F^{-\frac{n-1}{n}})''(0)\geq 0$$ 
is equivalent to the inequality
$$n(n-1)V_{2}(K,M)+\int_{\partial K} \frac{h_M^2(n_x)}{\langle x,n_x\rangle}\leq \frac{2n-1}{n}\frac{n^2V_{1}(K,M)^2}{|K|},$$
which follows from Corollary \ref{cor}. Unfortunately, this inequality is not invariant under the change $h_M(n_x)\rightarrow h_M(n_x)+\langle x,n_x\rangle.$ If it was, then using the local-to-global argument of Putterman \cite{Put}, we would get that for any pair of symmetric convex bodies $K$ and $L$ and any $\lambda\in [0,1]$, one has
$$|\lambda K+_0(1-\lambda)L|^{-\frac{n-1}{n}}\leq \lambda |K|^{-\frac{n-1}{n}}+(1-\lambda)|L|^{-\frac{n-1}{n}}.$$
\end{remark}

\medskip

In addition, we show the following result, which is a corollary of a result of Giannopoulos and Hartzoulaki \cite{GH}.

\begin{proposition}\label{prop}
For any pair of symmetric convex bodies $K$ and $M$ with non-empty interior, there exists a linear operator $T$, depending on $K$ and $M,$ such that  
$$n(n-1)V_{2}(K,TM)+\int_{\partial K} \frac{h_{TM}^2(n_x)}{\langle x,n_x\rangle}\leq \left(1+\frac{C\log n}{\sqrt{n}}\right)\frac{n^2V_{1}(K,TM)^2}{|K|},$$
where $C>0$ is an absolute constant.
\end{proposition}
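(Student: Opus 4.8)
The plan is to strip off the mixed-volume term $n(n-1)V_2(K,TM)$ using Minkowski's Second inequality (\ref{MinkQ}), reducing the Proposition to a one-sided estimate for the single integral $\int_{\partial K}h_{TM}^2(n_x)/\langle x,n_x\rangle$ in a suitably chosen linear position of $M$, and then to produce that position from the result of Giannopoulos and Hartzoulaki \cite{GH}. First note that the asserted inequality is invariant under $M\mapsto\lambda M$ — each of $V_2(K,M)$, $V_1(K,M)^2$ and $\int_{\partial K}h_M^2(n_x)/\langle x,n_x\rangle$ scales like $\lambda^2$ — so one may look for $T$ volume preserving, say with $|TM|=|K|$. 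Since (\ref{MinkQ}) gives $n(n-1)V_2(K,TM)\le\frac{n-1}{n}\cdot\frac{n^2V_1(K,TM)^2}{|K|}$, it is enough to find a volume preserving $T$ such that
$$\int_{\partial K}\frac{h_{TM}^2(n_x)}{\langle x,n_x\rangle}\ \le\ \Bigl(n+Cn^{3/2}\log n\Bigr)\frac{V_1(K,TM)^2}{|K|}.$$
Adding the two bounds then gives $n(n-1)V_2(K,TM)+\int_{\partial K}h_{TM}^2(n_x)/\langle x,n_x\rangle\le(n^2+Cn^{3/2}\log n)V_1(K,TM)^2/|K|=\bigl(1+\frac{C\log n}{\sqrt n}\bigr)n^2V_1(K,TM)^2/|K|$, which is the assertion.

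To see what \cite{GH} must supply, rewrite the integral as $\int_{\sfe}h_{TM}^2(u)/h_K(u)\,dS_K(u)$ and pass to the cone probability measure $d\mu_K=\frac{1}{n|K|}h_K\,dS_K$ of $K$ (here $\int_{\sfe}h_K\,dS_K=n|K|$). Then $\int_{\partial K}h_{TM}^2(n_x)/\langle x,n_x\rangle=n|K|\int_{\sfe}(h_{TM}/h_K)^2\,d\mu_K$ and $nV_1(K,TM)=n|K|\int_{\sfe}(h_{TM}/h_K)\,d\mu_K$, so the displayed estimate is equivalent to
$$\frac{\int_{\sfe}(h_{TM}/h_K)^2\,d\mu_K}{\bigl(\int_{\sfe}(h_{TM}/h_K)\,d\mu_K\bigr)^2}\ \le\ 1+Cn^{1/2}\log n.$$
Jensen's inequality always makes the left side $\ge1$ — this is exactly the H\"older bound $\int_{\partial K}h_{TM}^2(n_x)/\langle x,n_x\rangle\ge nV_1(K,TM)^2/|K|$ recorded right after (\ref{LogBM-norms}) — whereas Theorem \ref{thm-zono}, applied to the semi-norm $h_{TM}$, gives only the crude bound $n$ for this ratio. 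Hence the task is to choose the position $TM$ so that $h_{TM}/h_K$ is nearly constant on the support of $S_K$ in the $L^2(\mu_K)$ versus $L^1(\mu_K)$ sense, with the generous rate $1+Cn^{1/2}\log n$; equivalently, to bring $M$, in the averaged sense dictated by the cone measure of $K$, to within distortion $1+\frac{C\log n}{\sqrt n}$ of a dilate of $K$.

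The existence of such a near-optimal relative position of the two bodies follows from the theorem of Giannopoulos and Hartzoulaki \cite{GH}; feeding it the pair $K,M$ produces the required $T$, and the reduction and rewriting above then conclude the argument. The main obstacle is concentrated wholly in this input: the factor $n$ coming from Theorem \ref{thm-zono} must be improved all the way down to $1+Cn^{1/2}\log n$ after re-positioning $M$, a dimension-sensitive gain (a loss of this $1+\frac{C\log n}{\sqrt n}$ type is characteristic of optimal-position reverse inequalities in asymptotic convex geometry) that cannot be obtained from any of the soft inequalities of the present note and is precisely where \cite{GH} enters; everything else is the bookkeeping with Minkowski's Second inequality and the passage to the cone measure sketched above, which is routine.
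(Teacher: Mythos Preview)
Your reduction via Minkowski's Second inequality is correct and matches the paper exactly, and your rewriting in terms of the cone probability measure $d\mu_K=\frac{1}{n|K|}h_K\,dS_K$ is a clean way to phrase what has to be shown: one needs
\[
\frac{\displaystyle\int_{\sfe}(h_{TM}/h_K)^2\,d\mu_K}{\Bigl(\displaystyle\int_{\sfe}(h_{TM}/h_K)\,d\mu_K\Bigr)^2}\ \le\ 1+Cn^{1/2}\log n .
\]
The gap is in the last paragraph. You assert that this bound ``follows from the theorem of Giannopoulos and Hartzoulaki'' and describe the needed position as one in which ``$h_{TM}/h_K$ is nearly constant on the support of $S_K$.'' But the result in \cite{GH} is a \emph{volume ratio} statement: it produces $T$ with $TM\subset K$ and $(|K|/|TM|)^{1/n}\le C\sqrt{n}\log n$. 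It does not assert that $h_{TM}/h_K$ is close to a constant in any $L^p(\mu_K)$ sense, and no such near-constancy is used (or true) in the paper's argument. As written, your proof stops precisely at the point where the actual content lies.

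What is missing is short but essential, and it is exactly what the paper does. From $TM\subset K$ one has the pointwise bound $h_{TM}\le h_K$, hence
\[
\int_{\sfe}\frac{h_{TM}^2}{h_K}\,dS_K\ \le\ \int_{\sfe} h_{TM}\,dS_K\ =\ nV_1(K,TM).
\]
Then Minkowski's \emph{first} inequality gives $nV_1(K,TM)\ge n|K|\,(|TM|/|K|)^{1/n}$, so
\[
\int_{\sfe}\frac{h_{TM}^2}{h_K}\,dS_K\ \le\ nV_1(K,TM)\ \le\ \frac{1}{n}\Bigl(\frac{|K|}{|TM|}\Bigr)^{1/n}\cdot\frac{n^2V_1(K,TM)^2}{|K|}\ \le\ \frac{C\log n}{\sqrt{n}}\cdot\frac{n^2V_1(K,TM)^2}{|K|},
\]
the last step being the Giannopoulos--Hartzoulaki bound. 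In your cone-measure language, the inclusion gives $\int(h_{TM}/h_K)^2\,d\mu_K\le\int(h_{TM}/h_K)\,d\mu_K$, and Minkowski's first bounds the latter from below by $(|TM|/|K|)^{1/n}\ge (C\sqrt{n}\log n)^{-1}$; dividing yields your displayed inequality. So the mechanism is not ``$h_{TM}/h_K$ nearly constant'' but rather ``$h_{TM}/h_K\le 1$ pointwise, with average not too small,'' and the two missing ingredients are the trivial consequence $h_{TM}\le h_K$ of the inclusion and Minkowski's first inequality.
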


\medskip
\medskip

In contrast with the existing results about the Log-Brunn-Minkowski inequality, it is interesting to investigate the validity of (\ref{LogBM-infin}) \emph{for all symmetric convex sets $K$}, given that $f(x)=\varphi(n_x)$ on $\partial K,$ for some fixed function $\varphi:\sfe\rightarrow\R.$ Prior to this note, such a result was not known for any non-trivial examples of $\varphi$. To this end, we show the following
\begin{theorem}\label{thm-int}
For every symmetric strictly-convex $C^2$-smooth bounded set $K$ in $\R^n$ with non-empty interior, for $f(x)=t\langle x,n_x\rangle+|\langle v,n_x\rangle|$, for any $t\in\R$ and any $v\in\R^n,$ the inequality (\ref{LogBM-infin}) is true. In other words, (in view of the invariance of (\ref{LogBM-infin}) under the change $f\rightarrow f+t\langle x,n_x\rangle$),
$$\int_{\partial K} H_x \langle n_x,v\rangle^2-\langle \mbox{\rm{II}}^{-1}\nabla_{\partial K}  |\langle n_x,v\rangle|,  \nabla_{\partial K}  |\langle n_x,v\rangle|\rangle +\frac{\langle n_x,v\rangle^2}{\langle x,n_x\rangle}\leq \frac{1}{|K|}\left(\int_{\partial K} |\langle n_x,v\rangle|  \right)^2.$$

Equivalently, for every symmetric bounded convex set $K$ in $\R^n$ with non-empty interior, letting $M=[-v,v]$, we have
\begin{equation}\label{ineq-ref}
n(n-1)V_{2}(K,M)+\int_{\partial K} \frac{h_M^2(n_x)}{\langle x,n_x\rangle}\leq \frac{n^2V_{1}(K,M)^2}{|K|}.
\end{equation}
Furthermore, the equality in (\ref{ineq-ref}) is attained if and only if $K=C+[-v,v]$ for some $(n-1)-$dimensional symmetric convex $C$.
\end{theorem}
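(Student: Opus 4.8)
The plan is to establish the equivalent inequality (\ref{ineq-ref}); the differential form and the version with a general $t$ then follow from the equivalences already recorded above (Claims \ref{claim-invar}, \ref{claim-equiv}). The starting point is that $V_2(K,[-v,v])=0$: adding a segment parallel to $v$ lengthens every chord of $K$ in the direction $v$ by the same amount, so $t\mapsto|K+t[-v,v]|$ is affine and its second derivative vanishes (equivalently $\int_{\partial K}H_x\langle n_x,v\rangle^2-\langle\mathrm{II}^{-1}\nabla_{\partial K}|\langle n_x,v\rangle|,\nabla_{\partial K}|\langle n_x,v\rangle|\rangle=|K+t[-v,v]|''_{t=0}=0$). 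Using $h_{[-v,v]}(u)=|\langle v,u\rangle|$ and $nV_1(K,[-v,v])=\int_{\partial K}|\langle v,n_x\rangle|$, inequality (\ref{ineq-ref}) reduces to
\[
\left(\int_{\partial K}\frac{\langle v,n_x\rangle^2}{\langle x,n_x\rangle}\right)|K|\ \le\ \left(\int_{\partial K}|\langle v,n_x\rangle|\right)^2 .
\]
Normalising $|v|=1$ and choosing coordinates with $v=e_n$, Cauchy's projection formula identifies the right-hand side with $4|P|^2$, where $P:=P_{e_n^\perp}K$ is the orthogonal projection of $K$ onto $v^\perp$.

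The key step is to split this into two elementary one-line bounds, available after a harmless normalisation. The function $w\mapsto h_K((w,1))$ on $\R^{n-1}$ is convex and grows at infinity, hence attains its minimum; applying a volume- and symmetry-preserving shear of $K$ in the $e_n$-direction --- which leaves $P$, the chord lengths, the surface-weighting $\langle e_n,n_x\rangle\,dH_{n-1}$ on $\partial K$, and the $e_n$-axis intercepts of all supporting hyperplanes unchanged, hence fixes every term of the inequality --- I may assume this minimum is attained at $w=0$; write $h:=h_K(e_n)$. Now split $\partial K$ into its upper graph $x_n=b(y)$ ($b$ concave on $P$), its lower graph $x_n=a(y)$ ($a$ convex), and the lateral part where $\langle e_n,n_x\rangle=0$. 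On the upper graph the outer normal is a.e.\ proportional to $(-\nabla b(y),1)$, and a direct computation gives $\frac{\langle e_n,n_x\rangle^2}{\langle x,n_x\rangle}\,dH_{n-1}=\frac{dy}{\,b(y)-\langle y,\nabla b(y)\rangle\,}$; moreover $b(y)-\langle y,\nabla b(y)\rangle$ is exactly the height at which the supporting hyperplane of $K$ at $(y,b(y))$ meets the $e_n$-axis, so by $1$-homogeneity it equals $h_K((-\nabla b(y),1))\ge\min_w h_K((w,1))=h$. Hence the upper-graph part of $\int_{\partial K}\frac{\langle v,n_x\rangle^2}{\langle x,n_x\rangle}$ is at most $|P|/h$; by central symmetry the lower graph gives the same bound, the lateral part gives $0$, so $\int_{\partial K}\frac{\langle v,n_x\rangle^2}{\langle x,n_x\rangle}\le\frac{2|P|}{h}$.

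For the second factor I would use that the chord-length function $c(y):=b(y)-a(y)\ge 0$ is concave (sum of a concave and a convex function) and even (from $K=-K$, $a(-y)=-b(y)$), hence maximised at the centre: $c(y)\le c(0)=2b(0)\le 2h$ (the central chord is symmetric and $b(0)\le h_K(e_n)=h$). Therefore $|K|=\int_P c\le 2h|P|$, and multiplying the two bounds yields $\left(\int_{\partial K}\frac{\langle v,n_x\rangle^2}{\langle x,n_x\rangle}\right)|K|\le\frac{2|P|}{h}\cdot 2h|P|=4|P|^2$, as required. The step needing the most care is making the boundary decomposition and the pointwise formula for the integrand rigorous for a general (non-smooth, possibly non-strictly-convex) symmetric convex body: one works with the a.e.-defined Gauss map, the a.e.-differentiability of the profiles $b,a$, and the fact that the upper graph agrees, up to an $H_{n-1}$-null set, with $\{(y,b(y)):y\in\mathrm{int}\,P\}$. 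The smooth strictly convex case of the first formulation of the theorem is then the special case, via the equivalence already noted.

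Finally, for the equality characterisation, all quantities in the two displayed bounds are positive, so equality in (\ref{ineq-ref}) forces equality in each; in particular $|K|=2h|P|$, i.e.\ $\int_P c=c(0)|P|$ with $c\le c(0)$ and $c(0)=2h$, which forces $c\equiv 2h$ on $P$ (and $b(0)=h$). Then $b=a+2h$ with $b$ concave and $a$ convex, so $b$ is affine; combined with the normalising shear this gives, after undoing the shear, that $K$ is a right cylinder with edge parallel to $v$, i.e.\ $K=C+[-v,v]$ (up to the obvious rescaling of $v$) where $C$ is the graph of a linear functional over $P$, a symmetric $(n-1)$-dimensional convex set. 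Conversely, any such cylinder satisfies $\int_{\partial K}\frac{\langle v,n_x\rangle^2}{\langle x,n_x\rangle}=\frac{2|P|}{h}$ and $|K|=2h|P|$ by direct computation, hence gives equality.
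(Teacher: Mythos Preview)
Your argument is correct. After the common reduction $V_2(K,[-v,v])=0$, however, your route diverges from the paper's. The paper proves the single pointwise bound
\[
\frac{|\langle u,v\rangle|}{h_K(u)}\ \le\ \frac{2\,|K|v^\perp|}{|K|}\qquad(u\in\sfe),
\]
obtained from the section inequality $|K|\le 2h_K(u)\,|K\cap u^\perp|$ (Lemma~\ref{h_K}) together with $|K\cap u^\perp|\cdot|\langle u,v\rangle|\le|K|v^\perp|$, and then integrates against $|\langle u,v\rangle|\,dS_K$. You instead shear so that $w\mapsto h_K((w,1))$ is minimised at $w=0$, observe that on the upper graph $\frac{\langle e_n,n_x\rangle^2}{\langle x,n_x\rangle}\,dH_{n-1}=\frac{dy}{h_K((-\nabla b(y),1))}\le\frac{dy}{h}$, and combine the resulting bound $\int\le 2|P|/h$ with the separate estimate $|K|\le 2h|P|$ coming from concavity and evenness of the chord-length function. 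Your shear-invariance check is valid (indeed $b(y)-\langle y,\nabla b(y)\rangle$ is unchanged by shears in the $e_n$-direction), and your product-of-two-bounds argument correctly forces equality in each factor. The paper's approach is a little more streamlined (no normalisation is needed, and one inequality suffices), and its pointwise bound is what feeds directly into Lemma~\ref{max} and hence Theorem~\ref{thm-zono}; your two separate bounds do not immediately extend to a supremum over $v\in\Omega$ in the same way. On the other hand, your argument is pleasantly self-contained in terms of the profile functions and makes the equality analysis especially transparent: $c\equiv 2h$ forces $b$ to be simultaneously concave and convex, hence affine, whence (using the normalisation) $K$ is a cylinder with axis parallel to $v$. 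One cosmetic point: what you obtain is $K=C+h[-v,v]$, i.e.\ a cylinder with axis \emph{parallel} to $v$; the phrase ``right cylinder'' is slightly misleading since $C$ need not be orthogonal to $v$, and the ``rescaling of $v$'' caveat you add is indeed needed to match the theorem's phrasing.
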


\begin{remark}
The equivalence of the two formulations of the Theorem \ref{thm-int} was outlined when we proved Claim \ref{claim-equiv}.
\end{remark}

Speaking in vague terms, Theorem \ref{thm-int} indicates that the Local Log-Brunn-Minkowski inequality ``holds in one direction''; it is, in some sense, ``a localized version'' of this inequality. It is tempting to try and derive the conjecture from such localized version, however it will be clear in Section 6, that in order to hope to obtain any general result, one must deduce a ``localized'' version in at least two directions, which we believe to be harder. 

We would like to emphasize that Theorem \ref{thm-int} has no relation whatsoever to the local version of the global inequality
$$|\lambda K+_0(1-\lambda)[-v,v]|_n\geq |K|_n^{\lambda}|[-v,v]|^{1-\lambda}_n,$$
which is in itself trivial, since both sides if the inequality are zero.

In Section 2 we discuss some preliminaries. In Section 3 we prove Theorems \ref{thm-int} and \ref{thm-zono}. In Section 4 we prove Theorem \ref{thm-bound}. In Section 5 we prove Proposition \ref{prop}. In Section 6 we discuss interesting open questions and reductions. 

\textbf{Acknowledgement.} The authors are grateful to Yair Shenfeld for some illuminating discussions on the subject of the equality cases in Minkowski's second inequality. The first named author was supported by RFBR project 20-01-00432. The article was prepared within the framework of the HSE University Basic Research Program. The second named author is supported by the NSF CAREER DMS-1753260. 

\section{Preliminaries}

A convex body is a compact convex set with non-empty interior. Throughout the paper, $K, L, M$ stand for convex bodies in $\R^n$. Lebesgue volume shall be denoted by $|\cdot|$ as well as $|\cdot|_n$, and $|\cdot|_k$ stands for the $k$-dimensional Lebesgue measure, for $k\leq n.$ For a point $x\in\partial K,$ we denote by $n_x$ the unit outer normal vector to the boundary of $K$ at $x$; it is uniquely defined at any regular boundary point $x$ of $K$, and almost every boundary point of $K$ is regular (see Schneider \cite{book4}). The Gauss map is the map which acts from $\partial K$ into $\sfe$ and associates to any point $x$ its unit normal(s) $n_x$. Recall the notation $dS_K$ for the surface area measure of $K$ on the sphere, the push forward of the boundary measure $dH_{n-1}$ of $K$ onto the sphere under the Gauss map. For brevity, we will use the notation $\int_{\partial K} g$ in place of $\int_{\partial K} g(x) dH_{n-1}(x)$.

We say that a convex body $K$ is $C^2-$smooth if its boundary is a surface of class $C^2.$ We say that $K$ is in the class $C^{2,+}$ if it is $C^2$ smooth and the Gauss curvature is strictly positive for all the boundary points.

When $K$ is a polytope with normals $u_i$ and areas of facets $F_i$, with $i=1,...,N,$ we have
$$dS_K=\sum_{i=1}^N F_i \delta_{u_i},$$
where $\delta_{u_i}$ stands for the delta-measure at $u_i.$ Letting $h_i=h_K(u_i)$, we see that the inequality from Theorem \ref{thm-zono} for polytopes rewrites as
$$\sum_{i=1}^N \frac{F_i}{h_i}\|u_i\|^2\leq \frac{\left(\sum_{i=1}^N F_i\|u_i\|\right)^2}{|K|},$$
where $\|\cdot\|$ is an arbitrary semi-norm. In particular, when $K$ is a polytope circumscribed around $B_2^n,$ this boils down to
$$\E\|X\|^2\leq n\left(\E\|X\|\right)^2,$$
where $X$ is a random vector uniformly distributed over the finite set $supp(S_K)\subset\sfe$. 

\begin{Claim}{[A part of Minkowski's Existence and Uniqueness theorem]}\label{claim}
When a convex bounded set $K$ in $\R^n$ has non-empty interior, the span of the support of the measure $dS_K$ is the entire $\R^n.$ 
\end{Claim}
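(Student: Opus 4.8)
The plan is to argue by contradiction. Suppose that the span of $supp(dS_K)$ were a proper subspace of $\R^n$; then it would be contained in $u^{\perp}$ for some $u\in\sfe$, so that $\langle u,\theta\rangle=0$ for every $\theta\in supp(dS_K)$, and therefore
$$\int_{\sfe}|\langle u,\theta\rangle|\,dS_K(\theta)=0.$$
Since $dS_K$ is the push-forward of the boundary measure $dH_{n-1}$ of $K$ under the Gauss map, the left-hand side equals $\int_{\partial K}|\langle u,n_x\rangle|$; in the language used above, taking $M=[-u,u]$ so that $h_M(\theta)=|\langle u,\theta\rangle|$, and using the first variation formula $nV_1(K,M)=\int_{\partial K}h_M(n_x)$ from the proof of Claim~\ref{claim-equiv} (extended to the degenerate segment $M=[-u,u]$ by approximating it with the bodies $[-u,u]+\varepsilon B_2^n$ and invoking continuity of mixed volumes), this reads $V_1(K,[-u,u])=0$.

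I would then derive a contradiction by showing that $\int_{\partial K}|\langle u,n_x\rangle|>0$ whenever $K$ has non-empty interior. This is the Cauchy projection formula
$$\int_{\partial K}|\langle u,n_x\rangle|\,dH_{n-1}(x)=2\,|P_{u^{\perp}}K|_{n-1},$$
where $P_{u^{\perp}}K$ denotes the orthogonal projection of $K$ onto $u^{\perp}$; it follows, for instance, from the elementary identity $|K+t[-u,u]|=|K|+2t\,|P_{u^{\perp}}K|_{n-1}$ for $t\ge0$ (obtained by Fubini after decomposing $\R^n=u^{\perp}\oplus\R u$: over $z$ in the relative interior of $P_{u^{\perp}}K$, the fiber of $K+t[-u,u]$ is the fiber of $K$ lengthened by $t$ at each end) upon differentiating at $t=0$. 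Since $K$ has non-empty interior, it contains a Euclidean ball $B(x_0,\rho)$, and hence $P_{u^{\perp}}K\supseteq P_{u^{\perp}}B(x_0,\rho)$ is an $(n-1)$-dimensional ball of radius $\rho$; thus $|P_{u^{\perp}}K|_{n-1}>0$, contradicting the vanishing obtained above.

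I do not expect a real obstacle: the argument is entirely elementary once the Cauchy projection formula is available. The only point worth a line of care is the justification of that formula (or, in the mixed-volume formulation, of the first variation formula for the degenerate segment $[-u,u]$), which is classical but, if desired, can be reduced to the Fubini computation indicated above together with the continuity of mixed volumes in the Hausdorff metric. Everything else is routine.
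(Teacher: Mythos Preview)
Your argument is correct. The paper does not actually prove this claim: it simply records it as ``one of the aspects of Minkowski's theorem'' and refers the reader to Schneider \cite{book4}. So there is no proof in the paper to compare against.

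Your route is the natural elementary one and is entirely self-contained within the tools the paper already uses. Note that the paper states Cauchy's projection formula (\ref{Cauchy-formula}) a few lines after Claim~\ref{claim}, so your appeal to it is not circular (and in any case you sketch an independent Fubini derivation). The detour through mixed volumes and the approximation $[-u,u]+\varepsilon B_2^n$ is unnecessary: once you have $\int_{\sfe}|\langle u,\theta\rangle|\,dS_K(\theta)=\int_{\partial K}|\langle u,n_x\rangle|=2|K|u^{\perp}|_{n-1}>0$, you are done. Everything else in your write-up is fine.
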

This is one of the aspects of Minkowski's theorem, see, e.g. Schneider \cite{book4}.

For $u\in\R^n$, we will use the notation
$$u^{\perp}=\{x\in\R^n:\,\langle x,u\rangle=0\}$$
for the hyperplane orthogonal to $u.$ More generally, for a linear space $H$ we denote its orthogonal complement by $H^{\perp}$. Hyperplane sections of a convex set $K$ will be denoted by $K\cap u^{\perp}.$ We shall also consider orthogonal projections of convex sets onto linear subspaces $H$, given by
$$K|H=\{x\in H:\,\exists\,\, y\in H^{\perp}:\,\,x+y\in K\}.$$

Recall the Cauchy's projection formula (see, e.g. Koldobsky \cite{Kold}), which states that for any unit vector $v$,
\begin{equation}\label{Cauchy-formula}
|K|v^{\perp}|_{n-1}=\frac{1}{2}\int_{\partial K} |\langle n_x, v\rangle| =\frac{1}{2}\int_{\sfe} |\langle \theta, v\rangle| dS_K(\theta).
\end{equation}

The Minkowski functional of a symmetric convex bounded set $K$ with non-empty interior is the norm
$$\|x\|_K=\inf\{ t>0:\, x\in tK\}.$$
In case $K$ has empty interior, its Minkowski functional is a semi-norm. The polar body of $K$ is the convex body
$$K^o=\{x\in\R^n:\,\forall y\in K,\,\langle x,y\rangle\leq 1\}.$$
Recall that $K=K^{oo}$ in $\R^n,$ and that $h_{K^o}(x)=\|x\|_{K}$. An important property of support functions is the fact that
$$h_{K+L}=h_K+h_L.$$

When $K$ is an interval $[-v,v]$, for a vector $v\in\R^n,$ then
$$h_{[-v,v]}(u)=|\langle u,v\rangle|.$$
A convex body $M$ is called a zonotope if it is a Minkowski sum of intervals, or in other words,
$$h_M(x)=\sum_{i=1}^N \alpha_i |\langle x, v_i\rangle|,$$
for a collection of unit vectors $v_i$ and non-negative numbers $\alpha_i$. A cube $B^n_{\infty}$ is a zonotope, while a cross-polytope $B^n_1$ is not a zonotope. A convex body is called a zonoid, in case it is a limit of zonotopes, or, in other words
$$h_M(x)=\int_{\sfe} |\langle x, v\rangle| d\mu(v),$$
for some measure $\mu$ on $\sfe.$ For $p\in [2,\infty],$ the set $B_p^n$ is a zonoid. See, e.g., Koldobsky, Ryabogin, Zvavitch \cite{KRZ} for more information about zonoids.

Pick vectors $v, w\in\R^n$ with $\langle v,w\rangle\neq 0$, and pick a convex bounded set $C\subset w^{\perp}.$ A (tilted) symmetric \emph{cylinder} with \emph{axes} $v$ and \emph{base} $C$ is a convex body given by
$$K:=\{x+tv,\,x\in C, t\in [-1,1]\}=C+[-v,v].$$

Recall also that the first mixed volume has the integral representation:
$$V_1(K,L)=\frac{1}{n}\int_{\sfe} h_L(u)dS_K(u)=\frac{1}{n}\int_{\partial K} h_L(n_x).$$



\section{Proof of Theorems \ref{thm-int} and \ref{thm-zono}.} 


Throughout the section, $K$ denotes a symmetric compact convex set with non-empty interior. We shall use a few times the following simple fact:

\begin{lemma}\label{h_K}
For any $u\in\sfe$, 
$$\frac{1}{h_K(u)}\leq \frac{2|K\cap u^{\perp}|}{|K|}.$$
The equality occurs if and only if $K$ is a cylinder with the base orthogonal to $u.$
\end{lemma}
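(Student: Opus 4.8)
The plan is to prove the inequality $\frac{1}{h_K(u)}\leq \frac{2|K\cap u^\perp|}{|K|}$ by slicing $K$ along the direction $u$ and comparing it with a cylinder, using the one-dimensional concavity provided by the Brunn--Minkowski inequality. After a rotation we may assume $u=e_n$, write points of $\R^n$ as $(y,s)$ with $y\in u^\perp\cong\R^{n-1}$ and $s\in\R$, and set $A(s)=\{y\in u^\perp:\,(y,s)\in K\}$, so that $|K|=\int_\R |A(s)|_{n-1}\,ds$ and $K\cap u^\perp$ corresponds to $A(0)$. Since $K$ is symmetric, $A(-s)=-A(s)$, and by the Brunn--Minkowski inequality in $u^\perp$ the function $s\mapsto |A(s)|_{n-1}^{1/(n-1)}$ is concave on its support $[-a,a]$, where $a=h_K(u)$ (the support function in direction $u$ equals the largest $s$ with $A(s)\neq\emptyset$, using symmetry). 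Concavity together with symmetry gives $|A(s)|_{n-1}^{1/(n-1)}\leq |A(0)|_{n-1}^{1/(n-1)}$ for every $s$, hence $|A(s)|_{n-1}\leq |A(0)|_{n-1} = |K\cap u^\perp|_{n-1}$ for all $s\in[-a,a]$.

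First I would use this pointwise bound to estimate the volume: $|K|=\int_{-a}^{a}|A(s)|_{n-1}\,ds\leq 2a\,|K\cap u^\perp|_{n-1}=2h_K(u)\,|K\cap u^\perp|_{n-1}$, which rearranges exactly to the claimed inequality $\frac{1}{h_K(u)}\leq\frac{2|K\cap u^\perp|}{|K|}$. This is the entire argument for the inequality; it is short and the only genuine input is Brunn--Minkowski for the slices (equivalently, the concavity of the $(n-1)$-st root of the slice volume). Since $K$ is bounded with nonempty interior, all slices $A(s)$ for $|s|<a$ have nonempty interior in $u^\perp$, so there is no degeneracy to worry about, and $a=h_K(u)\in(0,\infty)$.

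For the equality case I would track when each inequality above is tight. Equality in $\int_{-a}^{a}|A(s)|_{n-1}\,ds\leq 2a\,|A(0)|_{n-1}$ forces $|A(s)|_{n-1}=|A(0)|_{n-1}$ for almost every $s\in(-a,a)$, and then by continuity of the slice volumes on the open support interval, for all such $s$. Combined with the concavity of $|A(s)|_{n-1}^{1/(n-1)}$ being pinned at its maximum throughout $(-a,a)$, the Brunn--Minkowski equality characterization forces all the slices $A(s)$, $|s|<a$, to be translates of one another: $A(s)=A(0)+b(s)$ for some vector-valued function $b$ with $b(0)=0$; convexity of $K$ then forces $b$ to be affine in $s$, say $b(s)=sw$ for some $w\in u^\perp$. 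Hence $K=\{(y+sw,s):\,y\in A(0),\,|s|\leq a\}$, which is precisely the tilted cylinder $C+[-v,v]$ with base $C=\{(y,0):y\in A(0)\}=K\cap u^\perp$ in $u^\perp$ and axis $v=(aw,a)$ (note $\langle v,u\rangle=a\neq 0$). Conversely, a direct computation shows any such cylinder gives equality.

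I expect the main obstacle to be the equality analysis rather than the inequality itself: one must argue carefully that ``all slices have equal volume and the root function is constant'' upgrades to ``all slices are translates,'' which requires invoking the equality case of Brunn--Minkowski (translates, up to a set of measure zero) and then using convexity of $K$ to make the translation vector depend affinely on the height. The inequality part, by contrast, is essentially a two-line consequence of concavity of $s\mapsto|A(s)|_{n-1}^{1/(n-1)}$, and I would present it first and cleanly before turning to the equality discussion.
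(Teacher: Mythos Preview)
Your proof is correct and follows essentially the same route as the paper: slice $K$ in the direction $u$, use Brunn--Minkowski to see that the central section has maximal $(n-1)$-volume, and integrate to get $|K|\leq 2h_K(u)\,|K\cap u^\perp|$. The paper's own argument is a two-line version of exactly this. Your treatment of the equality case is in fact more detailed than the paper's, which simply asserts that constant slice areas force $K$ to be a cylinder; your invocation of the Brunn--Minkowski equality characterization to upgrade ``equal volumes'' to ``translates,'' followed by the affine dependence of the translation vector from convexity, is the honest justification of that step.
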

\begin{proof} By Fubini's theorem, for every $u\in\sfe,$
$$|K|=\int_{-h_K(u)}^{h_K(u)}|K\cap (u^{\perp}+tu)|dt\leq 2h_K(u)|K\cap u^{\perp}|,$$
where the last inequality follows since the maximal section of a symmetric convex set is the central one (which follows, e.g., from the Brunn-Minkowski inequality; see, e.g. Koldobsky \cite{Kold}). The equality occurs if all the sections $|K\cap (u^{\perp}+tu)|$ have equal area, which occurs only if $K$ is a cylinder with the base orthogonal to $u.$
\end{proof}

We say that a collection $V$ of vectors is collinear (or parallel) to one another if for any pair $x,y\in V$ we have $|\langle x,y\rangle|=|x|\cdot |y|.$

As a consequence of Lemma \ref{h_K}, we get

\begin{lemma}\label{max}
For a symmetric bounded convex set $K$ with non-empty interior, and any set $\Omega\in\R^n,$ for every vector $u\in\sfe,$ one has
\begin{equation}\label{lemma1-sect3}
\frac{\sup_{v\in\Omega}|\langle u,v\rangle|}{h_K(u)}\leq \frac{\int_{\sfe} \sup_{v\in\Omega}|\langle \theta,v\rangle| dS_K(\theta)}{|K|}.
\end{equation}
The equality occurs if and only if $\Omega$ consists only of vectors collinear to one another and non-orthogonal to $u$, and $K$ is a cylinder with base orthogonal to $u$ and axes parallel to all the vectors in $\Omega.$
\end{lemma}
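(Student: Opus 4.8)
The plan is to recognize $\varphi(\theta):=\sup_{v\in\Omega}|\langle\theta,v\rangle|$ as the support function $h_W$ of $W:=\overline{\mathrm{conv}}(\Omega\cup(-\Omega))$, convert the right-hand side of (\ref{lemma1-sect3}) into a projection volume via Cauchy's formula (\ref{Cauchy-formula}), and then chain Lemma \ref{h_K} together with the elementary fact that the orthogonal projection between two hyperplanes rescales $(n-1)$-volume by the cosine of the angle between their unit normals. As a preliminary reduction I would note that replacing $\Omega$ by $W$ does not change $\varphi$, so we may work with $W$; we may assume $\varphi(u)>0$ (otherwise $\varphi\equiv 0$ on $\mathrm{supp}(dS_K)$ by Claim \ref{claim} and both sides vanish), and we may assume $W$ is bounded, since if its recession cone contained some $d\neq0$ then $\varphi\equiv+\infty$ on the hemisphere $\{\langle\cdot,d\rangle>0\}$, and as $\mathrm{supp}(dS_K)$ spans $\R^n$ (Claim \ref{claim}) and $dS_K$ is symmetric, this forces $\int_{\sfe}\varphi\,dS_K=+\infty$ and (\ref{lemma1-sect3}) is trivial. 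Pick $v^*\in W$ with $|\langle u,v^*\rangle|=h_W(u)=\varphi(u)$; then $v^*\neq0$, $\langle u,v^*\rangle\neq0$, and since $\varphi(n_x)\geq|\langle n_x,v^*\rangle|$ pointwise, $\int_{\sfe}\varphi\,dS_K\geq\int_{\partial K}|\langle n_x,v^*\rangle|$.

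For the inequality itself, writing $e=v^*/|v^*|$, I would run the chain
\begin{align*}
\frac{\varphi(u)}{h_K(u)}=\frac{|\langle u,v^*\rangle|}{h_K(u)}
&\le \frac{2\,|\langle u,v^*\rangle|\,|K\cap u^{\perp}|}{|K|}
=\frac{2|v^*|\,|P_{e^{\perp}}(K\cap u^{\perp})|_{n-1}}{|K|}\\
&\le \frac{2|v^*|\,|K|e^{\perp}|_{n-1}}{|K|}
=\frac{1}{|K|}\int_{\partial K}|\langle n_x,v^*\rangle|
\ \le\ \frac{1}{|K|}\int_{\sfe}\varphi\,dS_K,
\end{align*}
where the first inequality is Lemma \ref{h_K} (applied to the unit vector $u$), the middle equality uses that $P_{e^{\perp}}:u^{\perp}\to e^{\perp}$ has Jacobian determinant $|\langle u,e\rangle|$ (it is the identity on $u^{\perp}\cap e^{\perp}$ and contracts the complementary direction by $|\langle u,e\rangle|$), the second inequality holds because $P_{e^{\perp}}(K\cap u^{\perp})\subseteq P_{e^{\perp}}(K)=K|e^{\perp}$, and the last step is Cauchy's formula (\ref{Cauchy-formula}) for the unit vector $e$ together with $\varphi\geq|\langle\cdot,v^*\rangle|$. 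This is exactly (\ref{lemma1-sect3}).

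For the equality case I would trace back each step. Equality forces: (i) equality in Lemma \ref{h_K}, so $K=C+[-v_0,v_0]$ with $C=K\cap u^{\perp}$ an $(n-1)$-dimensional symmetric convex body in $u^{\perp}$ and $\langle v_0,u\rangle\neq0$; (ii) $P_{e^{\perp}}(K\cap u^{\perp})=K|e^{\perp}$; and (iii) $h_W=|\langle\cdot,v^*\rangle|$ on $\mathrm{supp}(dS_K)$. Since $K|e^{\perp}=(C|e^{\perp})+[-P_{e^{\perp}}v_0,\,P_{e^{\perp}}v_0]$ while $P_{e^{\perp}}(K\cap u^{\perp})=C|e^{\perp}$, condition (ii) forces $P_{e^{\perp}}v_0=0$, i.e.\ $v_0\parallel v^*$. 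Next I would describe $\mathrm{supp}(dS_K)$ for this cylinder: apart from $\pm u$ it consists of the outer normals along the lateral part $(\partial_{\mathrm{rel}}C)+[-v_0,v_0]$, every such normal lies in $v_0^{\perp}$, and — using that $\mathrm{supp}(dS_C)$ spans $u^{\perp}$ by Claim \ref{claim} applied inside the hyperplane $u^{\perp}$ — these lateral normals span $v_0^{\perp}$. Applying (iii) to such a normal $\theta\in v_0^{\perp}$ (where $\langle\theta,v^*\rangle=0$ since $v^*\parallel v_0$) gives $h_W(\theta)=0$; as such $\theta$ span $v_0^{\perp}$ and $W$ is symmetric and convex, this yields $W\subseteq\R v_0$. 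Hence $\Omega$ consists of vectors collinear with $v^*$ and non-orthogonal to $u$, while $K$ is a cylinder with base in $u^{\perp}$ and axis parallel to $v^*$. Conversely, for such $\Omega$ and $K=C+[-v_0,v_0]$ with $v_0\parallel v^*$ and $C\subset u^{\perp}$, one has $\varphi=|\langle\cdot,w\rangle|$ for some $w$ collinear with $v^*$, $K|w^{\perp}=C|w^{\perp}$, $|K|=2|\langle v_0,u\rangle|\,|C|_{n-1}$ and $h_K(u)=|\langle v_0,u\rangle|$, and a short computation shows that both sides of (\ref{lemma1-sect3}) equal $|\langle u,w\rangle|/|\langle v_0,u\rangle|$.

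The inequality itself is short once the projection-Jacobian fact is in hand; the step I expect to be the main obstacle is the equality analysis, and within it the precise description of $\mathrm{supp}(dS_K)$ for a possibly tilted cylinder together with the correct use of Claim \ref{claim} inside $u^{\perp}$ to guarantee that the lateral normals span the full hyperplane $v_0^{\perp}$ (which is what ultimately collapses $W$ to a segment).
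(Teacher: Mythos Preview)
Your argument is correct and follows the same route as the paper: Lemma~\ref{h_K}, the projection-area identity $|(K\cap u^\perp)|e^\perp|_{n-1}=|\langle u,e\rangle|\,|K\cap u^\perp|_{n-1}$, monotonicity under projection, and Cauchy's formula, chained in the same order; the only cosmetic difference is that you fix a maximizer $v^*\in W$ at the outset, whereas the paper carries the supremum through and swaps it with the integral at the last step, and your equality analysis (collapsing $W$ via $h_W\equiv 0$ on the span of the lateral normals) is a clean repackaging of the paper's version. One inconsequential slip: the parenthetical claim that $\varphi(u)=0$ forces $\varphi\equiv 0$ on $\mathrm{supp}(dS_K)$ is false (take $W$ a nontrivial segment inside $u^{\perp}$), but this does no harm since the left-hand side already vanishes in that case.
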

\begin{proof} First, note that for any unit vector $v,$ any $(n-1)-$dimensional affine hyperplane $H$ orthogonal to the unit vector $u$, and any measurable set $A\subset H$, we have
$$|A|v^{\perp}|_{n-1}=|A|_{n-1}\cdot |\langle u,v\rangle|.$$
Indeed, this is elementary for $n=2,$ and follows by Fubbini's theorem for $n\geq 2.$

Thus for any pair of vectors $v,u\in\sfe,$
\begin{equation}\label{subset}
|K\cap u^{\perp}|\cdot |\langle u,v\rangle|=|(K\cap u^{\perp})|v^{\perp}|\leq |K|v^{\perp}|,
\end{equation}
where in the last passage we observed that a projection of a subset is smaller than a projection of a set.

By Lemma \ref{h_K},
\begin{equation}\label{lastnum1}
\frac{\sup_{v\in\Omega}|\langle u,v\rangle|}{h_K(u)}\leq \frac{2}{|K|}|K\cap u^{\perp}|\cdot \sup_{v\in\Omega}|\langle u,v\rangle|\leq  \frac{2}{|K|}\sup_{v\in\Omega} |K|v^{\perp}|,
\end{equation}
where in the last passage we used (\ref{subset}).

On the other hand, by Cauchy's projection formula (\ref{Cauchy-formula}), and in view of the fact that the integral of the supremum of a non-negative quantity is larger than the supremum of integrals, we get
\begin{equation}\label{lastnum2}
2\sup_{v\in\Omega} |K|v^{\perp}|\leq \int_{\sfe} \sup_{v\in\Omega}|\langle \theta,v\rangle| dS_K(\theta).
\end{equation}
By scaling, it is enough to show (\ref{lemma1-sect3}) when $v\in\sfe.$ Thus the inequality (\ref{lemma1-sect3}) follows from (\ref{lastnum1}) and (\ref{lastnum2}).

Suppose now the equality holds in (\ref{lemma1-sect3}). Firstly, this means that the equality holds in the first passage of (\ref{lastnum1}), and thus by Lemma \ref{h_K}, $K$ must be a cylinder with base orthogonal to $u$. 

Let $\tilde{\Omega}\subset \Omega$ be the set of vectors $v$ in $\Omega$ which maximize $|\langle u,v\rangle|$. Note that for all $v\in \tilde{\Omega},$ we have $|\langle u,v\rangle|>0:$ indeed, otherwise, if the equality holds in (\ref{lemma1-sect3}), then all the vectors in the support of $dS_K(\theta)$ are orthogonal to all the vectors in $\Omega$. But, by Claim \ref{claim}, this contradicts the fact that $K$ has non-empty interior, and therefore the span of the support of $dS_K$ is the entire $\R^n.$

Therefore, for all $v\in \tilde{\Omega},$ we have $|\langle u,v\rangle|>0.$ Since the equality holds in (\ref{lemma1-sect3}), the equality also holds in (\ref{subset}), and therefore, $K$ (which is a cylinder with base orthogonal to $u$) has axes parallel to $v_0,$ some vector $v_0\in\tilde{\Omega}$. 

Next, in order for the equality
$$\sup_{\alpha\in A} \int f_{\alpha}(x)d\mu(x)=\int \sup_{\alpha\in A} f_{\alpha}(x)d\mu(x)$$
to occur, for some family of functions $f_{\alpha}\geq 0,$ indexed by some set $A,$ we must have $f_{\alpha}(x)=f_{\beta}(x)$, for almost every $x\in supp(\mu)$ and for all $\alpha,\beta\in A.$ Therefore, under the assumption of the equality in  (\ref{lemma1-sect3}), we have, for almost every $\theta$ from the support of $dS_K$, that $|\langle \theta,v_1\rangle|=|\langle \theta,v_2\rangle|$, for all $v_1,v_2\in \Omega$. But we have already concluded that $K$ is a cylinder with axes parallel to $v_0$ and base orthogonal to $u.$ Therefore, for all $\theta\in supp(S_K)\setminus \{u\},$ we have $\langle \theta,v_0\rangle=0.$ Thus $\Omega$ is contained in the set orthogonal to $supp(S_K)\setminus \{u\}.$ But since $K$ is a bounded convex set with non-empty interior, Claim \ref{claim} implies that the span of the support of $dS_K$ is the entire $\R^n,$ and hence the dimension of the set $supp(S_K)\setminus \{u\}$ is at least $n-1$. Therefore, all the vectors in $\Omega$ are parallel to $v_0.$ The proof is complete.
\end{proof}

As corollary, we notice

\begin{cor}\label{point}
For a symmetric convex bounded set $K$ with non-empty interior, and any vectors $u, v\in\R^n\setminus \{0\},$
\begin{equation}\label{lemma2-sect3}
\frac{|\langle u,v\rangle|}{h_K(u)}\leq \frac{\int_{\sfe} |\langle \theta,v\rangle| dS_K(\theta)}{|K|}.
\end{equation}
The equality holds if and only if $K$ is a cylinder with axes parallel to $v$ and base orthogonal to $u.$
\end{cor}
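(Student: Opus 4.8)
The plan is to obtain Corollary \ref{point} as the special case of Lemma \ref{max} in which the set $\Omega$ is the singleton $\{v\}$. First I would observe that both sides of the claimed inequality behave correctly under rescaling of $u$: the left-hand side $|\langle u,v\rangle|/h_K(u)$ is $0$-homogeneous in $u$ (numerator and denominator are each $1$-homogeneous), and the right-hand side does not involve $u$ at all. Hence it suffices to prove the inequality for $u\in\sfe$, and then I would simply invoke (\ref{lemma1-sect3}) with $\Omega=\{v\}$, noting that $\sup_{w\in\Omega}|\langle u,w\rangle|=|\langle u,v\rangle|$ and $\sup_{w\in\Omega}|\langle\theta,w\rangle|=|\langle\theta,v\rangle|$. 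This yields the inequality directly.

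For the equality characterization I would again read off Lemma \ref{max}. The singleton $\Omega=\{v\}$ is trivially a collection of vectors collinear to one another, so the only nontrivial part of the equality condition in Lemma \ref{max} is the requirement that $v$ be non-orthogonal to $u$, together with the conclusion that $K$ is a cylinder with base orthogonal to $u$ and axis parallel to $v$. So if $\langle u,v\rangle\neq 0$, equality holds precisely when $K$ is such a cylinder, as claimed.

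The one point that needs a separate (easy) argument is ruling out equality in the degenerate case $\langle u,v\rangle=0$, since the corollary's hypotheses allow $u,v$ to be orthogonal. In that case the left-hand side vanishes, so equality would force $\int_{\sfe}|\langle\theta,v\rangle|\,dS_K(\theta)=0$, i.e. $v$ would be orthogonal to every vector in the support of $dS_K$; but by Claim \ref{claim} the span of $\mathrm{supp}(dS_K)$ is all of $\R^n$ since $K$ has non-empty interior, which is a contradiction as $v\neq 0$. Thus equality in (\ref{lemma2-sect3}) forces $\langle u,v\rangle\neq 0$, and the characterization from Lemma \ref{max} applies verbatim.

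There is really no serious obstacle here: the statement is a direct specialization of Lemma \ref{max}, and the only thing to be careful about is the bookkeeping of the degenerate orthogonal case in the equality analysis, which is handled by the span statement in Claim \ref{claim}.
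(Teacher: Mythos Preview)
Your proposal is correct and follows exactly the approach of the paper: reduce to unit vectors by homogeneity and apply Lemma \ref{max} with the singleton $\Omega=\{v\}$. Your explicit treatment of the degenerate case $\langle u,v\rangle=0$ via Claim \ref{claim} is a detail the paper leaves implicit, but it is in the same spirit and changes nothing in the method.
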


We remark, that to deduce this corollary, we used that homogeneity of (\ref{lemma2-sect3}) in $u$ and $v$, and noticed that it suffices to prove this fact only for unit vectors.

\medskip
\medskip

\textbf{Proof of Theorem \ref{thm-int}.} By the invariance of (\ref{LogBM-infin}) under the change $f\rightarrow f+t\langle x,n_x\rangle,$ it suffices to show that for any symmetric convex body $K$ and any vector $v\in\R^n$, letting $M=[-v,v]$, we have
$$n(n-1)V_{2}(K,M)+\int_{\partial K} \frac{h_M^2(n_x)}{\langle x,n_x\rangle}\leq \frac{n^2V_{1}(K,M)^2}{|K|}.
$$
Note that the function
$$|K+t[-v,v]|_n=|K|_n+2t|v|\cdot |K|v^{\perp}|_{n-1}$$
is linear in $t$, and therefore
$$V_2(K,[-v,v])=\frac{1}{n(n-1)}|K+t[-v,v]|''_{t}=0.$$
The statement therefore follows from Corollary \ref{point}, by integrating (\ref{lemma2-sect3}) on $\sfe$ with respect to the measure $|\langle u,v\rangle| dS_K(u)$, and in view of the fact that
$$nV_1(K,[-v,v])=\int_{\sfe} |\langle \theta,v\rangle|dS_K(\theta)=2|K|v^{\perp}|,$$
where in the last passage we used Cauchy's projection formula (\ref{Cauchy-formula}). 

Next, suppose equality holds. Then equality must hold in Corollary \ref{point} for all $u$ in the support of $dS_K$. Note that when $K$ is a cylinder with axes parallel to $v$ and base orthogonal to some vector $u_0,$ then all the vectors $u\in supp(S_K)\setminus \{u_0\}$ have the property $\langle u,v\rangle=0.$ We apply the equality case characterization from Corollary \ref{point} to every $u$ in the support of $|\langle \theta, v\rangle|dS_K(\theta)$, and conclude that, firstly, there is only one vector $u_0$ in the support of $dS_K$ for which $|\langle u_0,v\rangle|>0,$ and secondly, $K$ is a cylinder with axes parallel to $v$ and base orthogonal to $u_0.$ 


\medskip
\medskip

\textbf{Proof of Theorem \ref{thm-zono}.} Recall that any semi-norm $\|\cdot\|$ on $\R^n$ can be written as
$$\|u\|=\sup_{v\in\Omega} |\langle u,v\rangle|,$$
for some set $\Omega$. The conclusion thus follows by integrating (\ref{lemma1-sect3}) from Lemma \ref{max} on $\sfe$ with respect to the measure $\sup_{v\in\Omega}|\langle u,v\rangle| dS_K(u)$. 

Suppose now that the equality holds. Then the equality must hold in Lemma \ref{max}. Since $K$ is a bounded convex set with non-empty interior, by Claim \ref{claim} there is at least one vector $u_0$ in the support of $dS_K$ which is not orthogonal to all the vectors in $\Omega$ (unless $\Omega=\{0\}$, which is a trivial case anyway). Thus in order for the equality to hold in Theorem \ref{thm-zono}, $\Omega$ must consist only of vectors parallel to one another, by the equality case characterization in Lemma \ref{max}, applied with $u_0$. The remaining part of the equality case characterization follows from the equality case characterization in Theorem \ref{thm-int}. $\square$

\section{Proof of Theorem \ref{thm-bound}.}

We begin by formulating a Lemma, which follows from a Bochner-type identity obtained by Kolesnikov and Milman \cite{KM1}, which is a generalization of a classical result of R.C.~Reilly, along with the non-negativity of mixed volumes. Recall that for a matrix $A=(a_{ij})$, its Hilbert-Schmidt norm $\|A\|_{HS}=\sqrt{\sum_{i,j} a^2_{ij}}$.

\begin{lemma}\label{lemma-boch}
Let $K$ be $C^2$-smooth strictly convex body in $\R^n$. Let $\|\cdot\|$ be an arbitrary semi-norm in $\R^n$. Let $u:K\rightarrow\R$ be any $C^2$ function such that $\langle \nabla u,n_x\rangle=\|n_x\|$ for all $x\in\partial K.$ Then
$$\int_{K} \|\nabla^2 u\|_{HS}^2\leq \int_K (\Delta u)^2.$$
\end{lemma}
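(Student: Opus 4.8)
The plan is to invoke the generalized Reilly/Bochner identity of Kolesnikov and Milman \cite{KM1}. Recall that for a $C^2$ function $u$ on a $C^2$-smooth strictly convex body $K$, that identity reads
$$\int_K (\Delta u)^2 - \|\nabla^2 u\|_{HS}^2 = \int_{\partial K} H_x \langle \nabla u, n_x\rangle^2 - \langle \mathrm{II}^{-1}\nabla_{\partial K}(\langle\nabla u,n_x\rangle), \nabla_{\partial K}(\langle\nabla u,n_x\rangle)\rangle + \mathrm{II}(\nabla_{\partial K}u, \nabla_{\partial K}u),$$
up to the precise form of the boundary term as stated in \cite{KM1} (the key point is that the boundary integrand is, after integration by parts on $\partial K$, exactly the quadratic form appearing in $n(n-1)V_2$ of $K$ against a suitable set, plus a nonnegative contribution from the tangential part). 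First I would set $g(x) := \langle \nabla u(x), n_x\rangle$ on $\partial K$; by hypothesis $g(x) = \|n_x\| = h_M(n_x)$ where $M = \{v : \|v\|^* \le 1\}$ is the unit ball of the dual norm, so that $\|\cdot\| = h_M(\cdot)$ on $\R^n$. Then the first two boundary terms combine, via the formula for the second variation of volume recalled in the proof of Claim \ref{claim-equiv}, into $n(n-1)V_2(K,M)$.

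The second step is to control the remaining tangential boundary term $\int_{\partial K}\mathrm{II}(\nabla_{\partial K}u,\nabla_{\partial K}u)$. Since $K$ is strictly convex, $\mathrm{II}$ is positive definite, so this term is nonnegative; discarding it only helps the inequality we are after. Alternatively — and this is the cleaner route if the identity from \cite{KM1} is stated so that the tangential term already has a favorable sign — one simply observes that the right-hand side of the Bochner identity, minus $\|\nabla^2 u\|_{HS}^2$, is bounded below by $n(n-1)V_2(K,M)$, which is nonnegative by the nonnegativity of mixed volumes (Claim \ref{claim}-adjacent fact that $V_2 \ge 0$ for convex bodies). Putting these together:
$$\int_K (\Delta u)^2 - \int_K \|\nabla^2 u\|_{HS}^2 \;\ge\; n(n-1)V_2(K,M) \;\ge\; 0,$$
which is exactly the claimed inequality.

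The main obstacle is purely bookkeeping: matching the precise boundary term in the version of the Reilly-type identity of \cite{KM1} with the second-variation-of-volume expression, and making sure the sign of the tangential/second-fundamental-form contribution is the one I claim. Concretely, one must verify that after integrating $\langle \mathrm{II}^{-1}\nabla_{\partial K}g,\nabla_{\partial K}g\rangle$ by parts on the closed manifold $\partial K$ (as done in the proof of Claim \ref{claim-invar}, citing \cite{KM1} subsection 2.1) no stray terms of indefinite sign survive — i.e. that the only leftover is a positive-definite quadratic form in $\nabla_{\partial K}u$ times $\mathrm{II}$. Given strict convexity of $K$ this is exactly where positivity of $\mathrm{II}$ is used, and it is the one place where the hypothesis that $K\in C^{2,+}$ (rather than merely convex) is essential; everything else is a direct quotation of \cite{KM1} combined with $V_2(K,M)\ge 0$.
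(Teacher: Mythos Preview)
Your overall strategy matches the paper's: invoke the Kolesnikov--Milman Reilly-type identity, then recognize the boundary contribution as bounded below by $n(n-1)V_2(K,M)\ge 0$. However, you have misstated the identity itself, and this leads you to misidentify the one nontrivial step.

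The actual identity from \cite{KM1} (equation~(\ref{railey}) in the paper) has boundary integrand
\[
H_x\,g^2 \;-\; 2\langle \nabla_{\partial K} u,\, \nabla_{\partial K} g\rangle \;+\; \langle \mathrm{II}\,\nabla_{\partial K} u,\, \nabla_{\partial K} u\rangle,
\]
with $g=\langle\nabla u,n_x\rangle$; note the \emph{cross term} $-2\langle \nabla_{\partial K} u,\nabla_{\partial K} g\rangle$, not the $-\langle\mathrm{II}^{-1}\nabla_{\partial K}g,\nabla_{\partial K}g\rangle$ you wrote. You cannot have both the $\mathrm{II}^{-1}$ term and the $\mathrm{II}(\nabla_{\partial K}u,\nabla_{\partial K}u)$ term in the identity simultaneously. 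With the correct form, the term $\langle\mathrm{II}\,\nabla_{\partial K}u,\nabla_{\partial K}u\rangle$ is \emph{not} a throwaway positive leftover; it is needed to absorb the indefinite cross term. The passage to the $\mathrm{II}^{-1}$ expression is accomplished pointwise by the elementary matrix inequality
\[
\langle A x,x\rangle + \langle A^{-1}y,y\rangle \ge 2\langle x,y\rangle
\]
applied with $A=\mathrm{II}$, $x=\nabla_{\partial K}u$, $y=\nabla_{\partial K}g$, which gives
\[
-2\langle \nabla_{\partial K} u,\nabla_{\partial K} g\rangle + \langle \mathrm{II}\,\nabla_{\partial K}u,\nabla_{\partial K}u\rangle \ge -\langle \mathrm{II}^{-1}\nabla_{\partial K}g,\nabla_{\partial K}g\rangle.
\]
Only after this step does the boundary integral dominate $\int_{\partial K}\bigl(H_x g^2 - \langle\mathrm{II}^{-1}\nabla_{\partial K}g,\nabla_{\partial K}g\rangle\bigr)=n(n-1)V_2(K,M)\ge 0$. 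No integration by parts on $\partial K$ is involved in this step; your diagnosis of the ``bookkeeping'' as an integration-by-parts issue is off. Once you replace your stated identity with the correct one and insert this one-line matrix inequality, the proof is complete and coincides with the paper's.
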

\begin{proof} Kolesnikov and Milman \cite{KM1} showed, for all $C^2$-smooth functions $u$ on $K,$ for which the integrals below exist, that:
\begin{align}
\label{railey}
\int_{K} (\Delta u)^2 dx & =\int_{K} ||\nabla^2 u||_{HS}^2dx+
\\&  \nonumber\int_{\partial K}  H_x \langle \nabla u,n_x\rangle^2 -2\langle \nabla_{\partial K} u, \nabla_{\partial K}  \langle \nabla u,n_x\rangle\rangle +\langle \mbox{\rm{II}} \nabla_{\partial K} u, \nabla_{\partial K}  u\rangle.
\end{align}
Recall that for any positive definite $n\times n$ matrix $A$ and for any $x,y\in\R^n$ we have
\begin{equation}\label{matrix}
\langle Ax,x\rangle+\langle A^{-1}y,y\rangle\geq 2 \langle x,y\rangle.
\end{equation}
As $K$ is convex, its second fundamental form $\rm{II}$ is positive definite, and consequently, letting 
$$f(x)=\langle \nabla u,n_x\rangle=\|n_x\|,$$ 
we get
\begin{equation}\label{sqf}
-2\langle \nabla_{\partial K} u, \nabla_{\partial K}  f\rangle +\langle \mbox{\rm{II}} \nabla_{\partial K} u, \nabla_{\partial K}  u\rangle\geq \langle \mbox{\rm{II}}^{-1}\nabla_{\partial K}  f,  \nabla_{\partial K}  f\rangle.
\end{equation}
By (\ref{railey}) and (\ref{sqf}), we have 
\begin{equation}\label{1!}
\int_{\partial K} H_x f^2-\langle \mbox{\rm{II}}^{-1}\nabla_{\partial K}  f,  \nabla_{\partial K}  f\rangle \leq \int_K (\Delta u)^2-\|\nabla^2 u\|_{HS}^2.
\end{equation}
Recall that for any semi-norm $\|\cdot\|$ there exists symmetric convex set $M$ (possibly with an empty interior) such that $\|\cdot\|_{M^{\circ}}=\|\cdot\|$. When $f(x)=\|n_x\|_{M^o},$ for some symmetric bounded convex set $M\subset\R^n$, we have
\begin{equation}\label{2!}
\int_{\partial K} H_x f^2-\langle \mbox{\rm{II}}^{-1}\nabla_{\partial K}  f,  \nabla_{\partial K}  f\rangle=|K+tM|''_0=n(n-1)V_2(K,M)\geq 0,
\end{equation}
and the desired statement follows from (\ref{1!}) and (\ref{2!}). See e.g. \cite{Col1}, \cite{KolMil-1}, as well as \cite{KM1}, \cite{KM2}, \cite{KolMil}, \cite{KolMilsupernew}, \cite{KolLiv}, for the proof of the first passage in (\ref{2!}), and e.g. Schneider \cite{book4} for the non-negativity of mixed volumes.
\end{proof}


We are now ready to prove Theorem \ref{thm-bound}. Fix a symmetric convex set $K$ in $\R^n$ with non-empty interior and fix a semi-norm $\|\cdot\|$ on $\R^n.$ Let $r$ be the inradius of $K$ (that is, the radius of the largest ball contained in $K$), and let $C_{poin}$ be the Poincare constant of $K$ (defined in the introduction (\ref{PCdef})).

Without loss of generality we may assume that $\|\cdot\|$ is in fact a norm, and is infinitely smooth. We may also assume that $K$ is strictly convex and the boundary of $K$ is of class $C^{\infty}.$ The general result would then follow by approximation.

Let $u: K\rightarrow\R$ be the solution of the Laplace equation with Neumann boundary condition
$$\langle \nabla u,n_x\rangle=\|n_x\|,\,\,\,\,x\in\partial K,$$
and 
$$\Delta u=\frac{\int_{\partial K} \|n_x\| }{|K|},\,\,\,\,x\in K.$$
It exists by the standard results from PDE, see, e.g., Evans \cite{evans}, and the function $|\nabla u|$ is continuously differentiable up to the boundary, under our regularity assumptions. We estimate
$$\int_{\partial K} \frac{\|n_x\|^2}{\langle x,n_x\rangle}\leq \frac{1}{r}\int_{\partial K}|\nabla u|\langle \nabla u, n_x\rangle,$$
in view of the fact that $\langle\nabla u, n_x\rangle=\|n_x\|\geq 0$ (see \cite{HKL} for a similar estimate). Note that, for any $\alpha, \beta>0,$
$$div(|\nabla u|\nabla u)=\Delta u |\nabla u|+\langle \nabla^2 u \frac{\nabla u}{|\nabla u|},\nabla u\rangle\leq \frac{\alpha}{2} (\Delta u)^2+\frac{1}{2\alpha} |\nabla u|^2+\frac{\beta}{2}\|\nabla^2 u\|_{HS}^2+\frac{1}{2\beta}|\nabla u|^2.$$
Thus, by divergence theorem, we get
$$\int_{\partial K} \frac{\|n_x\|^2}{\langle x,n_x\rangle}\leq \frac{1}{r}\int_K \frac{\alpha}{2} (\Delta u)^2+\frac{1}{2\alpha} |\nabla u|^2+\frac{\beta}{2}\|\nabla^2 u\|_{HS}^2+\frac{1}{2\beta}|\nabla u|^2 dx\leq$$
$$\frac{1}{r}\int_K \frac{\alpha}{2} (\Delta u)^2+\left(\frac{C^2_{poin}}{2\alpha}+\frac{\beta}{2}+\frac{C^2_{poin}}{2\beta}\right)\|\nabla^2 u\|_{HS}^2,$$
where in the last line we used the Poincare inequality coordinate-wise for $\nabla u,$ in view of the fact that $u$ is even and thus $\int_K \nabla u=0$. Lastly, we let $\alpha=\beta=C_{poin}$, and use Lemma \ref{lemma-boch}, which states 
$$\int_K \|\nabla^2 u\|_{HS}^2\leq \int_K(\Delta u)^2,$$ 
in order to conclude
$$\int_{\partial K} \frac{\|n_x\|^2}{\langle x,n_x\rangle}\leq\frac{2 C_{poin}}{r}\cdot \int_{K}(\Delta u)^2.$$
It remains to recall that $\Delta u$ is a constant function, and thus 
$$\int_{K}(\Delta u)^2dx=\frac{(\int_K \Delta u)^2}{|K|}=\frac{\left(\int_{\partial K} \|n_x\|\right)^2}{|K|},$$
where in the last passage we used the Divergence Theorem. We conclude that 
\begin{equation}\label{concl-sect5}
\int_{\partial K} \frac{\|n_x\|^2}{\langle x,n_x\rangle}\leq\frac{2 C_{poin}}{r} \cdot\frac{\left(\int_{\partial K} \|n_x\|\right)^2}{|K|}.
\end{equation}
Lastly, we note that for any volume-preserving linear transformation $T,$ and any symmetric bounded convex set $M$
$$\int_{\partial (TK)} \frac{\|n_x\|_M^2}{\langle x,n_x\rangle}=\int_{\sfe} \frac{\|u\|^2_M}{h_{TK}(u)} dS_{TK}(u)=\int_{\sfe} \frac{\|u\|^2_{(T^*)^{-1}M}}{h_{K}(u)} dS_{K}(u)=\int_{\partial K} \frac{\|n_x\|_{\bar{M}}^2}{\langle x,n_x\rangle},$$
where $\bar{M}=(T^*)^{-1}M.$ Similarly,
$$\int_{\partial (TK)} \|n_x\|_M=\int_{\partial K} \|n_x\|_{\bar{M}}.$$
See formula (2.3.2) in Artstein-Avidan, Giannopoulos, V. Milman \cite{AGM} and the discussion there for more details about this coordinate change. Therefore, (\ref{concl-sect5}) yields the statement of the theorem. $\square$

\section{Proof of the Proposition \ref{prop}.}

For symmetric convex bodies $K$ and $M$, we are looking to show that there exists a linear operator $T$ such that 
$$n(n-1)V_2(K,TM)+\int_{\sfe} \frac{h_{TM}^2(n_x)}{h_K}dS_K\leq \left(1+\frac{C\log n}{\sqrt{n}}\right)\cdot\frac{n^2 V_1(K,TM)^2}{|K|}.$$

Note that the statement is invariant under the operation of dilating $M$. Let $T$ be such an operator that brings $M$ to the position of minimal volume ratio with $K$. That is, suppose that $TM\subset K$ and $\frac{|K|}{|TM|}$ is minimal (among all the choices for $T\in GL_n$). Giannopoulos and Hartzoulaki \cite{GH} showed that in this case,
\begin{equation}\label{GH}
\left(\frac{|K|}{|TM|}\right)^{\frac{1}{n}}\leq C\sqrt{n}\log n.
\end{equation}
As $TM\subset K,$ we have $h_{TM}\leq h_K,$ and therefore
\begin{equation}\label{est1}
\int_{\sfe} \frac{h_{TM}^2}{h_K}dS_K\leq \int_{\sfe} h_{TM}dS_K.
\end{equation}
On the other hand, by Minkowski's first inequality,
\begin{equation}\label{est2}
\frac{n^2 V_1(K,TM)^2}{|K|}=\frac{nV_1(K,TM)}{|K|}\int_{\sfe} h_{TM}dS_K\geq n\left(\frac{|TM|}{|K|}\right)^{\frac{1}{n}}\int_{\sfe} h_{TM}dS_K.
\end{equation}
Combining (\ref{est1}) and (\ref{est2}) with the Giannopoulos-Hartzoulaki bound (\ref{GH}), we get
\begin{equation}\label{est3}
\int_{\sfe} \frac{h_{TM}^2}{h_K}dS_K\leq \frac{C\log n}{\sqrt{n}}\cdot \frac{n^2 V_1(K,TM)^2}{|K|}. 
\end{equation}

The above, together with Minkowski's Second inequality (\ref{MinkQ}) yields the desired result. $\square$

\section{Some questions and reductions.}

An interesting partial case of the local version of the Log-Brunn-Minkowski conjecture 
$$n(n-1)V_{2}(K,M)+\int_{\partial K} \frac{h_M^2(n_x)}{\langle x,n_x\rangle}\leq \frac{n^2V_{1}(K,M)^2}{|K|},$$
arises when $M=[-e_1,e_1]\times[-e_2,e_2]$, that is a two-dimensional square. 

\begin{proposition}\label{square}
Suppose the Log-Brunn-Minkowski conjecture holds. Then, for every symmetric convex body $K$,
\begin{equation}\label{eq-square}
8|K|span(e_1,e_2)^{\perp}|_{n-2}+\int_{\sfe} \frac{(|u_1|+|u_2|)^2}{h_K(u)}dS_K(u)\leq \frac{4\left(|K|e_1^{\perp}|_{n-1}+|K|e_2^{\perp}|_{n-1}\right)^2}{|K|_n}.
\end{equation}
\end{proposition}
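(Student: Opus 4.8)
The plan is to deduce (\ref{eq-square}) from the conjectural inequality (\ref{LogBM-norms}) by making the single choice $M=[-e_1,e_1]\times[-e_2,e_2]$, and then identifying the three resulting terms. Since this square is $2$-dimensional, it has empty interior when $n\geq 3$; Claim \ref{claim-equiv}, together with the standard approximation of a degenerate symmetric bounded convex set by full-dimensional ones and the continuity of mixed volumes, shows that the Log-Brunn-Minkowski conjecture still yields (\ref{LogBM-norms}) for such $M$. So it suffices to compute $h_M$, $V_1(K,M)$ and $V_2(K,M)$ in this case.

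The first two are immediate. Writing $M=[-e_1,e_1]+[-e_2,e_2]$ as a Minkowski sum of segments, $h_M(u)=h_{[-e_1,e_1]}(u)+h_{[-e_2,e_2]}(u)=|u_1|+|u_2|$, which is exactly the middle term on the left-hand side of (\ref{eq-square}). Next, $nV_1(K,M)=\int_{\sfe}h_M\,dS_K=\int_{\sfe}(|u_1|+|u_2|)\,dS_K$, and Cauchy's projection formula (\ref{Cauchy-formula}) gives $\int_{\sfe}|u_i|\,dS_K=2|K|e_i^{\perp}|_{n-1}$ for $i=1,2$; hence $\frac{n^2V_1(K,M)^2}{|K|}$ is precisely the right-hand side of (\ref{eq-square}).

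The one term requiring a short computation is $V_2(K,M)=\frac{1}{n(n-1)}|K+tM|''_{t=0}$. Here I would use the elementary fact that the volume of the Minkowski sum of a convex body with a segment is an affine function of the length of the segment, namely $|L+t[-v,v]|_k=|L|_k+2t\,|L|v^{\perp}|_{k-1}$ for a unit vector $v$ and $t\geq 0$, together with the fact that orthogonal projection commutes with Minkowski addition. Applying this to $K+tM=(K+t[-e_1,e_1])+t[-e_2,e_2]$, first in the $e_2$-direction and then, inside the hyperplane $e_2^{\perp}$, in the $e_1$-direction (using $(K+t[-e_1,e_1])|e_2^{\perp}=(K|e_2^{\perp})+t[-e_1,e_1]$ and $(K|e_2^{\perp})|(e_1^{\perp}\cap e_2^{\perp})=K|span(e_1,e_2)^{\perp}$), one obtains that
$$|K+tM|_n=|K|_n+2t\left(|K|e_1^{\perp}|_{n-1}+|K|e_2^{\perp}|_{n-1}\right)+4t^2\,|K|span(e_1,e_2)^{\perp}|_{n-2}$$
is a quadratic polynomial in $t$. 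Differentiating twice at $t=0$ gives $n(n-1)V_2(K,M)=8\,|K|span(e_1,e_2)^{\perp}|_{n-2}$, the first term on the left of (\ref{eq-square}).

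Substituting these three identities into (\ref{LogBM-norms}) turns it into exactly (\ref{eq-square}). There is no genuine obstacle here beyond the assumed Log-Brunn-Minkowski inequality itself: the argument is pure bookkeeping, and the only points needing care are the approximation step legitimizing (\ref{LogBM-norms}) for the lower-dimensional $M$, and keeping precise track of the numerical constants in the iterated segment-addition (Steiner-type) expansion above.
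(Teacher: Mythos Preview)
Your proof is correct and follows the same overall strategy as the paper: choose $M=[-e_1,e_1]+[-e_2,e_2]$, identify $h_M$, $V_1(K,M)$ and $V_2(K,M)$, and read off (\ref{eq-square}) from the conjectural inequality (\ref{LogBM-norms}). The identifications of $h_M$ and $V_1$ are identical to the paper's.

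The one point of genuine difference is the derivation of the key identity $n(n-1)V_2(K,M)=8\,|K|span(e_1,e_2)^{\perp}|_{n-2}$. The paper cites this as a special case of Theorem~5.3.1 in Schneider and then gives a direct proof by integrating by parts on $\partial K$, passing through the intermediate formula
\[
n(n-1)V_{2}(K,M)=2\int_{Le_1} \frac{\langle \mathrm{II}e_1, e_2\rangle}{|\mathrm{II}e_1|}\,|\langle n_x,e_2\rangle|,
\]
and then applying Cauchy's projection formula inside $e_1^{\perp}$. Your argument is more elementary: you compute $|K+tM|_n$ explicitly as a quadratic polynomial in $t$ by iterating the segment-addition identity $|L+t[-v,v]|=|L|+2t\,|L|v^{\perp}|$ together with the commutation of orthogonal projection and Minkowski addition, and then differentiate twice. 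Both routes are valid; yours avoids any differential geometry on $\partial K$ and any smoothness assumption on $K$, while the paper's boundary computation ties in with the curvature-based language used elsewhere in the article.
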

\begin{proof} Let $M=[-e_1,e_1]\times[-e_2,e_2]$, which in fact also means $M=[-e_1,e_1]+[-e_2,e_2]$. By Cauchy's projection formula (\ref{Cauchy-formula}), and the linearity of mixed volumes,
$$nV_1(K,M)=nV_1(K,[-e_1,e_1])+nV_1(K,[-e_2,e_2])=2\left(|K|e_1^{\perp}|_{n-1}+|K|e_2^{\perp}|_{n-1}\right).$$
In view of the above, as well as the fact that $h_M(u)=|u_1|+|u_2|$, it remains to show that
\begin{equation}\label{airplane}
n(n-1)V_{2}(K,M)=8|K|span(e_1,e_2)^{\perp}|_{n-2}.
\end{equation}
Then we would get (\ref{LogBM-norms}), which itself is equivalent to the Log-Brunn-Minkowski inequality. 

We note that (\ref{airplane}) is well-known to experts: it is a special case of Theorem 5.3.1 from Schneider \cite{book4}. However, we present the proof for completeness. Without loss of generality, assume that $K$ has smooth boundary. We note that
\begin{equation}\label{statement}
n(n-1)V_{2}(K,M)=2\int_{Le_1} \frac{\langle \rm{II}e_1, e_2\rangle}{|\rm{II}e_1|}|\langle n_x,e_2\rangle|,
\end{equation}
where 
$$Le_1=\{x\in\partial K:\,\langle n_x,e_1\rangle=0\}.$$
Indeed, integrating by parts in $\partial K$, we get
$$\int_{\partial K} H_x |\langle n_x,e_1\rangle| |\langle n_x,e_2\rangle|  =-\int_{\partial K} div_{\partial K} (e_1-\langle n_x,e_1\rangle n_x)sign(\langle n_x,e_1\rangle) |\langle n_x,e_2\rangle|=$$
$$2\int_{Le_1} \frac{\langle \rm{II}e_1, e_2\rangle}{|\rm{II}e_1|}|\langle n_x,e_2\rangle|+\int_{\partial K} sign(\langle n_x,e_1\rangle)sign(\langle n_x,e_2\rangle)\langle \rm{II} (e_1-\langle n_x,e_1\rangle n_x),e_2-\langle n_x,e_2\rangle n_x\rangle,$$
and (\ref{statement}) follows by polarization.

Note that the projection of $Le_1$ onto $e_1^{\perp}$ is precisely the boundary of $K|e_1^{\perp}$. Furthermore, note that for $x\in Le_1$, the vector $\rm{II}e_1$ is orthogonal to the surface of $Le_1$. We conclude that
$$\int_{Le_1} \frac{\langle \rm{II}e_1, e_1\rangle}{|\rm{II}e_1|}|\langle n_x,e_2\rangle|=\int_{\partial (K|e_1^{\perp})} |\langle n_x,e_2\rangle|.$$

Applying the Cauchy projection formula (\ref{Cauchy-formula}) to $L=K|e_1^{\perp}$ we get 
$$\int_{\partial (K|e_1^{\perp})} |\langle n_x,e_2\rangle|=2|(K|e_1^{\perp})|e_2^{\perp}|_{n-2}=2|K|span(e_1,e_2)^{\perp}|_{n-2},$$
which yields (\ref{airplane}), and thus the Proposition is proven.
\end{proof}

\begin{remark} In some sense, the inequality (\ref{eq-square}) reminds of Bonnesen's inequality in two dimensions, which played an important role in the proof of the Log-Brunn-Minkowski conjecture in dimension two by B\"or\"oczky, Lutwak, Yang, Zhang \cite{BLYZ}. Additionally, (\ref{eq-square}) may be viewed as a ``localization in two directions'' of the Local Log-Brunn-Minkowski inequality. It is tempting to apply a term-by-term estimate, and to use Theorem \ref{thm-int} in order to deduce (\ref{eq-square}), however, unfortunately, the remaining inequality may be false. This can be seen from letting $K$ to be a hexagon on the plane which is close in its shape to a square. 
\end{remark}

By linearity of mixed volumes, we also note the following

\begin{proposition}\label{reduction}
Suppose for any pair of vectors $v$ and $w,$ the inequality 
$$4|K|span(v,w)^{\perp}|_{n-2}+\int_{\sfe} \frac{\langle u,v\rangle \langle u,w\rangle}{h_K(u)}dS_K(u)\leq \frac{2|K|v^{\perp}|_{n-1}\cdot |K|w^{\perp}|_{n-1}}{|K|_n}$$
is true for some symmetric convex body $K$. Then for any zonoid $M,$
\begin{equation}\label{Logbm}
n(n-1)V_{2}(K,M)+\int_{\partial K} \frac{h_M^2(n_x)}{\langle x,n_x\rangle}\leq \frac{n^2V_{1}(K,M)^2}{|K|}.
\end{equation}
\end{proposition}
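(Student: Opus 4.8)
The plan is to exploit the multilinearity of mixed volumes, in the spirit of the phrase ``by linearity of mixed volumes.'' First I would reduce to the case of a zonotope: every zonoid $M$ is, by definition, a Hausdorff limit of zonotopes $M_k=\sum_{i=1}^{N_k}[-v^{(k)}_i,v^{(k)}_i]$, and since $V_1(K,\cdot)$ and $V_2(K,\cdot)$ are continuous in the Hausdorff metric (continuity of mixed volumes, e.g. Schneider \cite{book4}), $|K|$ is fixed, and $h_{M_k}\to h_M$ uniformly on $\sfe$ while $h_K\geq r(K)>0$ there, one has $\int_{\partial K}\frac{h_{M_k}^2(n_x)}{\langle x,n_x\rangle}=\int_{\sfe}\frac{h_{M_k}^2}{h_K}\,dS_K\longrightarrow\int_{\sfe}\frac{h_{M}^2}{h_K}\,dS_K$. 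Hence it suffices to prove (\ref{Logbm}) when $M=\sum_{i=1}^{N}[-v_i,v_i]$ is a zonotope. (Alternatively, one may work directly with the representation $h_M=\int_{\sfe}|\langle\cdot,\theta\rangle|\,d\mu(\theta)$ and the integral form of the bilinearity of mixed volumes for zonoids, but approximation is the most elementary route.)

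For a zonotope $M=\sum_{i=1}^N[-v_i,v_i]$ I would expand each of the three quantities in (\ref{Logbm}) as a sum over ordered pairs $(i,j)$: from $h_M=\sum_i|\langle\cdot,v_i\rangle|$ one gets $h_M^2=\sum_{i,j}|\langle\cdot,v_i\rangle|\,|\langle\cdot,v_j\rangle|$; from the integral formula for $V_1$ and $\bigl(\sum_i a_i\bigr)^2=\sum_{i,j}a_ia_j$ one gets $n^2V_1(K,M)^2=\sum_{i,j}\bigl(\int_{\sfe}|\langle u,v_i\rangle|\,dS_K(u)\bigr)\bigl(\int_{\sfe}|\langle u,v_j\rangle|\,dS_K(u)\bigr)$; and from the additivity of the mixed volume in its last two arguments together with $M=\sum_i[-v_i,v_i]$ one gets $n(n-1)V_2(K,M)=\sum_{i,j}n(n-1)V(K[n-2],[-v_i,v_i],[-v_j,v_j])$, where $V(K[n-2],\cdot,\cdot)$ denotes the mixed volume with $K$ repeated $n-2$ times. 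Thus (\ref{Logbm}) splits into $N^2$ scalar inequalities, one per pair $(i,j)$. For $i=j$ the pair-inequality is exactly $\int_{\sfe}\frac{|\langle u,v_i\rangle|^2}{h_K}\,dS_K\leq\frac1{|K|}\bigl(\int_{\sfe}|\langle u,v_i\rangle|\,dS_K\bigr)^2$, i.e. (\ref{ineq-ref}) of Theorem \ref{thm-int} applied with $v=v_i$ (recall $V_2(K,[-v_i,v_i])=0$, as in the proof of that theorem). For $i\ne j$ the pair-inequality, once $n(n-1)V(K[n-2],[-v_i,v_i],[-v_j,v_j])$ is rewritten in terms of $|K|span(v_i,v_j)^{\perp}|_{n-2}$ and $\int_{\sfe}|\langle u,v_i\rangle|\,dS_K$ in terms of $|K|v_i^{\perp}|_{n-1}$ by Cauchy's formula (\ref{Cauchy-formula}), is precisely the hypothesis of the Proposition applied with $v=v_i$, $w=v_j$. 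Summing the $N$ diagonal contributions and the off-diagonal contributions (grouped in pairs $\{i,j\}$) yields (\ref{Logbm}) for the zonotope, and the approximation step above finishes the proof for an arbitrary zonoid.

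The one step deserving genuine care is the bookkeeping of the mixed-volume identity $n(n-1)\,V(K[n-2],[-v,v],[-w,w])=4\,|K|span(v,w)^{\perp}|_{n-2}$ for orthonormal $v,w$ (with the evident Gram-determinant correction for general $v,w$, which makes both sides vanish when $v\parallel w$, consistently with $V_2(K,[-v,v])=0$), and matching it to the normalization in the statement. This is the special case of Theorem~5.3.1 of \cite{book4} already invoked in the proof of Proposition \ref{square}; alternatively it follows by differentiating $t\mapsto|K+s[-v,v]+t[-w,w]|$ and using $(K+s[-v,v])|w^{\perp}=(K|w^{\perp})+s\bigl([-v,v]|w^{\perp}\bigr)$ together with the Gram identity relating $|v\wedge w|$ to the length of the projection of $v$ onto $w^{\perp}$, exactly as in the computation leading to (\ref{airplane}). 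Beyond this there is no substantive obstacle: the real content is the per-pair cross-term estimate, which is precisely what is assumed in the hypothesis, and which --- as the remark following Proposition \ref{square} stresses --- is genuinely delicate and does not follow from Theorem \ref{thm-int} alone.
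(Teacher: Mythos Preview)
Your proposal is correct and follows essentially the same route as the paper's own proof: reduce to zonotopes by approximation, expand bilinearly over ordered pairs $(i,j)$ of segments, invoke Theorem~\ref{thm-int} on the diagonal terms and the assumed cross-term inequality on the off-diagonal ones, using (\ref{airplane}) and Cauchy's formula to convert mixed volumes into projection volumes. You are in fact a bit more careful than the paper in spelling out the Hausdorff-limit passage and in flagging the Gram-determinant normalization for non-orthonormal $v,w$.
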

\begin{proof} 

Suppose $Z=\sum_{i=1}^N [-v_i,v_i]$ is a zonotope. 
Note, by the linearity of mixed volumes, that (\ref{Logbm}) for $K$ and $M=Z$ rewrites as
$$\sum_{i,j} n(n-1)V(K,...,K, [-v_i,v_i], [-v_j,v_j])+\sum_{i,j} \int_{\sfe} \frac{\langle u,v_i\rangle \langle u,v_j\rangle}{h_K}dS_K\leq $$$$ \sum_{i,j} \frac{n^2 V_1(K,[-v_i,v_i])\cdot V_1(K,[-v_j,v_j])}{|K|}.$$
Recall that $n V_1(K,[-v_i,v_i])=2|K|v_i|_{n-1}$. Note also, letting $S_{ij}=[-v_i,v_i]\times [-v_j,v_j]$: 
$$n(n-1)V(K,...,K, [-v_i,v_i], [-v_j,v_j])=\frac{n(n-1)}{2} V_2(K,S_{ij})=4|K|span(v_i,v_j)^{\perp}|_{n-2},$$
where the first passage follows from the fact that $V_2(K, [-v,v])=0$ for any vector $v,$ and the second passage follows from (\ref{airplane}). Thus the desired inequality follows from the term-by-term application of our assumption when $i\neq j$ and Theorem \ref{thm-int} when $i=j$. The conclusion follows by approximation. 


\end{proof}

\end{document}